\newtheorem{thm}{Theorem}[section]
\newtheorem{lem}[thm]{Lemma}
\newtheorem{prop}[thm]{Proposition}
\theoremstyle{definition}
\newtheorem{defn}[thm]{Definition}
\theoremstyle{remark}
\newtheorem{exmpl}[thm]{Example}
\newcommand{\norm}[1]{\left\Vert#1\right\Vert}
\newcommand{\abs}[1]{\left\vert#1\right\vert}
\newcommand{\R}{\mathbb R}
\newcommand{\Z}{\mathbb Z}
\newcommand{\To}{\rightarrow}
\DeclareMathOperator{\Epi}{epi}
\DeclareMathOperator{\Loc}{loc}
\DeclareMathOperator{\Lin}{Lin}
\DeclareMathOperator{\Span}{span}
\DeclareMathOperator{\Co}{co}
\DeclareMathOperator{\Cone}{cone}
\def\Hy{Hyv\"arinen}
\newcommand\blfootnote[1]{%
  \begingroup
  \renewcommand\thefootnote{}\footnote{#1}%
  \addtocounter{footnote}{-1}%
  \endgroup
}
\providecommand{\keywords}[1]{\textbf{\textit{Keywords:}} #1}
\title{Existence and Uniqueness of Proper Scoring Rules}
\author{Evgeni Y. Ovcharov
\thanks{The author has been supported by the European Union Seventh Framework Programme under grant agreement no. 290976.}
\thanks{Much of this work was done while the author was a PostDoc at the University of Heidelberg, Germany.}
\\
\small Heidelberg Institute for Theoretical Studies\\
\small  Schloss-Wolfsbrunnenweg 35, D-69118 Heidelberg, Germany\\
\small \texttt{trulr6@yahoo.com}
}
\begin{document}

\maketitle

\begin{abstract}
To discuss the existence and uniqueness of proper scoring rules one needs to extend the associated entropy functions as sublinear functions to the conic hull of the prediction set. In some natural function spaces, such as the Lebesgue $L^p$-spaces over $\R^d$, the positive cones have empty interior. Entropy functions defined on such cones have only directional derivatives. Certain entropies may be further extended continuously to open cones in normed spaces containing signed densities. The extended densities are G\^ateaux differentiable except on a negligible set and have everywhere continuous subgradients due to the supporting hyperplane theorem. We introduce the necessary framework from analysis and algebra that allows us to give an affirmative answer to the titular question of the paper. As a result of this, we give a formal sense in which entropy functions have uniquely associated proper scoring rules. We illustrate our framework by studying the derivatives and subgradients of the following three prototypical entropies: Shannon entropy, Hyv\"arinen entropy, and quadratic entropy.
\end{abstract}

\keywords{proper scoring rules, entropy, characterisation, existence, uniqueness, directional derivative, G\^ateaux derivative, subgradient, sublinear, convex analysis}

\blfootnote{AMS 2000 subject classifications: Primary 62C99; Secondary 62A99, 26B25}.

\pagestyle{myheadings}
\markboth{Ovcharov}{Existence and Uniqueness of Proper Scoring Rules}

\section{Introduction}

Proper scoring rules have attracted a lot of interest in recent years in disparate fields such as statistics, decision theory, machine learning, game theory, finance, meteorology, etc. They provide practical measures for assessing the accuracy and precision of probabilistic forecasts. In this paper, we build a general measure-theoretic framework for proper scoring rules that allows us to consider their existence and uniqueness as subgradients of sublinear functions.

\subsection{Definitions}
Let $(\Omega,\mathcal A,\mu)$ be a measure space and $\mathcal P$ be a convex set of probability densities on $\Omega$ with respect to the measure $\mu$. A random variable $X$ takes values in $\Omega$ with unknown true density $p\in\mathcal P$. We refer to $\mathcal P$ and its elements as a \emph{prediction set} and \emph{predictive densities} for $X$, respectively. By $\mathcal L(\mathcal P)$ we denote the set of all $\mu$-measurable functions $f:\Omega\To\R$ such that
\begin{equation*}
  \int_\Omega \abs{f(x)}p(x)d\mu(x) < \infty
\end{equation*}
for all $p\in\mathcal P$. We call the elements of $\mathcal L(\mathcal P)$ $\mathcal P$-\emph{integrable} functions.

A \emph{scoring rule} $S:\mathcal P\To\mathcal L(\mathcal P)$ assigns for each predictive density $q\in\mathcal P$ a $\mathcal P$-integrable function $S(q)$. The value of $S(q)$ at $x\in\Omega$ is interpreted as a numerical score assigned to the outcome $x$. We take scoring rules to be \emph{positively orientated}, that is, they are viewed as incentives which a forecaster wishes to maximise. It is customary to term $S$ \emph{proper} if the expected value of $S$ at $q$,
\begin{equation*}
  p\cdot S(q):=\int_\Omega S(q)(x)p(x)d\mu(x),
\end{equation*}
is maximised in $q$ at the true density $q=p$, and \emph{strictly proper}, if the true density is the only maximiser.

Strictly proper scoring rules could be used as a bonus system under which truth-telling is the only optimal long-term strategy \citep{GR}. For such an $S$, the optimal expected reward is the \emph{(negative) entropy} induced by $S$,
\[
  \Phi:\mathcal P\To\R,\quad \Phi(p)=p\cdot S(p),
\]
\citep{PDL}. In what follows, we refer to $\Phi$ simply as the entropy function associated to $S$, as there is no danger of confusion between negative and positive entropy functions in the present context. The \emph{regret} for quoting $q$ instead of the true density $p$ is expressed by the function
\begin{equation*}
  D:\mathcal P\times\mathcal P\To\R,\quad D(p,q) = p\cdot S(p)- p\cdot S(q),
\end{equation*}
which in the statistics literature is also known as the \emph{divergence} induced by $S$. In the present paper, we shall use the notions of entropy and divergence in a more general sense by replacing strict propriety with propriety.

General overviews of proper scoring rules may be found in \cite{GR, GK} in connection to probabilistic forecasting, and also in \cite{DawMus1}, where the emphasis is on statistical inference. Theoretical aspects of proper scoring rules are studied in \cite{D, GD, Will}. \cite{FK} investigate proper scoring rules in connection with the elicitation of private information. The remaining references throughout the text provide links to more specific uses of scoring rules.

\subsection{Motivation and Scope of the Paper}

In this paper we adopt the theoretical framework of \citet{HB}. This approach is characterised by exploiting a beautiful connection with \emph{Euler's homogeneous function theorem}, which presupposes that we extend our quantities of interest as homogeneous functions to the conic hull of the prediction set. To that end, we introduce the \emph{prediction cone} $\mathcal P^+=\{\lambda p \,|\, \lambda>0,\, p\in\mathcal P\}$ and extend $S$ and $\Phi$ to $\mathcal P^+$ as homogeneous functions of degrees zero and one, respectively. Any $\mathcal P$-integrable function $q^*$ satisfying
\begin{equation*}
  \Phi(p) \geq p\cdot q^*, \quad \forall p\in\mathcal P^+,
\end{equation*}
with equality for $p=q$, is called a \emph{$\mathcal P$-integrable subgradient} of $\Phi$ at $q$. The subgradient is called \emph{strict} if the above inequality is strict for all $p\in\mathcal P^+$ not positively collinear to $q$. Suppose that $\Phi$ has a subgradient $S(q)\in\mathcal L(\mathcal P)$ at each $q\in\mathcal P^+$ and the resulting map $S:\mathcal P^+\To \mathcal L(\mathcal P)$ is homogeneous of degree zero. We call $S$ a $\mathcal P$-\emph{integrable subgradient} of $\Phi$ on $\mathcal P^+$. We recall that a (strictly) convex homogeneous function of degree one is a \emph{(strictly) sublinear function}. We may now state Hendrickson and Buehler's classical result in a slightly more contemporary language.

\begin{thm}\label{thm: HB}
Let $\mathcal P$ be a prediction set with respect to the measure space $(\Omega,\mathcal A,\mu)$. A scoring rule $S: \mathcal P^+\To \mathcal L(\mathcal P)$ is (strictly) proper if and only if there is a (strictly) sublinear function $\Phi:\mathcal P^+\To\R$ such that $S$ is a subgradient of $\Phi$ on $\mathcal P^+$.
\end{thm}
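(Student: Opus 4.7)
My plan is a clean two-way reduction between the ``propriety'' statement on $\mathcal{P}$ (a statement about expected scores) and the ``sublinear + subgradient'' statement on $\mathcal{P}^+$ (a statement about the geometry of a function on a cone), with the bridge in both directions being the degree-$0$ homogeneity of $S$ and the degree-$1$ homogeneity of $\Phi$.

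For the forward direction I would start by \emph{defining} the candidate entropy $\Phi(p):=p\cdot S(p)$ on $\mathcal{P}^+$; since $S$ is degree-$0$ homogeneous, a direct computation shows $\Phi(\lambda p)=\lambda\Phi(p)$ for $\lambda>0$, so $\Phi$ is degree-$1$ homogeneous. To upgrade propriety on $\mathcal{P}$ to the subgradient inequality on $\mathcal{P}^+$, I would take arbitrary $p,q\in\mathcal{P}^+$, write $p=\alpha p_0$, $q=\beta q_0$ with $p_0,q_0\in\mathcal{P}$ and $\alpha,\beta>0$, and apply propriety at the normalised pair $(p_0,q_0)$, then rescale by $\alpha$ using homogeneity of both sides to obtain $\Phi(p)\ge p\cdot S(q)$, with equality at $q=p$ by construction. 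Convexity of $\Phi$ is then free: the inequality just shown says $\Phi(p)=\sup_{q\in\mathcal{P}^+}p\cdot S(q)$, a pointwise supremum of functionals that are \emph{linear} in $p$ (the score $S(q)$ is a fixed $\mathcal{P}$-integrable function), and hence convex; combined with homogeneity, $\Phi$ is sublinear.

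For the reverse direction I would simply restrict the subgradient inequality $\Phi(p)\ge p\cdot S(q)$ to $p,q\in\mathcal{P}$, where the equality case at $p=q$ reads $p\cdot S(p)\ge p\cdot S(q)$, which is propriety on the nose. The strict-versus-nonstrict refinement I would handle symmetrically. For the ``$\Rightarrow$'' direction, given $p,q\in\mathcal{P}^+$ not positively collinear, normalise to $p_0,q_0\in\mathcal{P}$ with $p_0\ne q_0$, let $s_0=(p+q)/\|p+q\|_1\in\mathcal{P}$ (which differs from both $p_0$ and $q_0$ precisely because $p_0\ne q_0$), and apply strict propriety to each of the pairs $(p_0,s_0)$ and $(q_0,s_0)$; rescaling and adding yields $\Phi(p)+\Phi(q)>(p+q)\cdot S(p+q)=\Phi(p+q)$, the strict sublinearity inequality. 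For the converse ``$\Leftarrow$'' direction, I would argue by contradiction: if $p\ne q$ in $\mathcal{P}$ and $p\cdot S(p)=p\cdot S(q)$, adding $q\cdot S(q)=\Phi(q)$ to the subgradient inequality $\Phi(p)\ge p\cdot S(q)$ (now an equality) produces $\Phi(p)+\Phi(q)=(p+q)\cdot S(q)\le\Phi(p+q)$, which together with sublinearity forces equality in $\Phi(p+q)\le\Phi(p)+\Phi(q)$, contradicting strict sublinearity since $p,q$ are not positively collinear.

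The only step I see as needing real care, rather than being formal, is the bookkeeping in the forward direction that moves from ``propriety of $S$ on $\mathcal{P}$'' to ``subgradient inequality on $\mathcal{P}^+$'': one must check that $\Phi$ is real-valued (which follows from $\mathcal{P}$-integrability of the scores), and that the scaling $p\mapsto\alpha p_0$ correctly converts the propriety inequality into the unnormalised subgradient inequality via the bilinearity of $(p,f)\mapsto p\cdot f=\int fp\,d\mu$ and the degree-$0$ homogeneity of $S$. Everything else is a straightforward unpacking of the definitions of propriety, subgradient, and (strict) sublinearity, together with the observation that a supremum of affine functionals is convex.
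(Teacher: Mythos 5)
Your proof is correct and follows essentially the same route as the paper: identify the subgradient inequality with (homogeneously extended) propriety for the uncertainty function $\Phi(p)=p\cdot S(p)$, and deduce (strict) sublinearity from the existence of a (strict) subgradient. The only cosmetic difference is that you obtain convexity as a supremum of linear functionals and then handle strictness via the combined point $p+q$, whereas the paper's Lemma \ref{lem: subg-subl} gets both at once by the weighted sum of the two subgradient inequalities at $(1-\lambda)p+\lambda q$; the content is the same.
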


Theorem \ref{thm: HB} provides us with a basic but insufficient theoretical framework to discuss the titular question of this paper. In support of this claim, in Example \ref{exmpl: sup} we show the existence of a sublinear function that has unique but non-$\mathcal P$-integrable subgradients at some points of its domain, while at other points it has multiple $\mathcal P$-integrable subgradients. The most important structure missing in Theorem \ref{thm: HB} is the notion of \emph{interior} of a convex domain, which lies at the intersection of geometry, algebra, and topology, and may have different incarnations depending on the context  \citep{BVff, Rock}. For example, studying proper local scoring rules on discrete sample spaces, \citet{DLP} apply Theorem \ref{thm: HB} in a context where the prediction cone is the interior of the positive orthant in $\R^d$. In this case, well-known results from convex analysis give necessary and sufficient conditions for an affirmative answer to our basic question. The real focus of our paper is thus the non-Euclidean case in the abstract measure-theoretic setting introduced above.

In Proposition \ref{prop: subdiff} and Example \ref{exmpl: Hahn-B}, we show that at boundary points sublinear functions have either no subgradient, or infinitely many. Therefore, it is paramount to try to define entropy functions on interiors of positive cones. In infinite dimensions, however, this is not always possible. Indeed, it is well-known that the positive cones in many natural function spaces (such as the Lebesgue $L^p$-spaces over $\R^d$) have empty interiors \citep{BorLew} and are negligible sets in terms of Baire category. This calls for a more subtle approach to our problem in which we need to refine our notion of interior and boundary. Inspired by geometric functional analysis, we adapt an algebraic refinement of the notion of interior of convex sets, whose better known topological analogues are often referred to as \emph{quasi-interior} \citep{FullBraun, BorLew}. Common entropies whose domains are positive cones with empty interior but nonempty quasi-interior are the \emph{Shannon entropy}, the \emph{\Hy\ entropy}, and in principle, the entropies associated with the \emph{proper local scoring rules of arbitrary orders}. These entropies are formally not differentiable functions but possess directional derivatives on large subspaces, which display similar properties to standard gradients.

Other entropies, such as those that are associated with the families of \emph{power scoring rules} and \emph{pseudospherical scoring rules} may be extended continuously to open cones in normed spaces that contain signed densities. Geometrically, this setting is similar to the Euclidean setting. One applies the supporting hyperplane theorem and other standard results in analysis relating subgradients and G\^ateaux derivatives. The latter entropies are G\^ateaux differentiable (either everywhere or outside a negligible set), which we illustrate in the context of the \emph{quadratic scoring rule}.

The original part of the paper is concerned with the analysis of the notion of $\mathcal P$-integrable subgradient introduced by \cite{HB} and the associated most basic general framework for proper scoring rules. To address the question of existence and uniqueness of proper scoring rules, we equip this framework with a notion of algebraic quasi-interior. As an illustration, we show that the Hyv\"arinen scoring rule is the unique 0-homogeneous $\mathcal P$-integrable subgradient of its entropy function on the (non-empty) quasi-interior of a suitable positive cone.

The paper is organised as follows. In Section \ref{sect: not}, we introduce the notation and present all the  background facts. Section \ref{sect: main} contains our main results which formulate necessary and sufficient conditions for existence and uniqueness of subgradients of entropy functions. In Section \ref{sect: exmpl}, we illustrate the theory with applications to three prototypical entropy functions, namely, the Shannon, \Hy, and quadratic entropy. These examples formalise the meaning with respect to which we may consider each entropy to have a uniquely associated proper scoring rule. We complete the main part of the paper in Section \ref{sect: conc} with some closing remarks. The proofs of all formal assertions made in the text are given in Appendix \ref{app: proofs}. In Appendix \ref{app: exmpl}, we present additional facts that illustrate various points made in the Introduction or later in the text.

\section{Notation and Preliminaries}\label{sect: not}

Let $E$, $E_1$, $E_2$ be sets of $\mu$-measurable functions on $\Omega$. For $\alpha\in\R$, we use the notation
\begin{align*}
  \alpha E_1 &= \{\alpha {f}\,|\, {f}\in E_1\} \\
  E_1 + E_2 &= \{{f}+{g}\,|\, {f}\in E_1, {g}\in E_2\}.
\end{align*}
The (\emph{blunt}) \emph{cone} of $E$ is the set $E^+=\{\lambda {f}\,|\, \lambda>0, {f}\in E\}$, while the \emph{pointed cone} of $E$ is the set $E^+\cup\{0\}$. The \emph{convex hull} of $E$,
\[
\Co E = \left\{\sum _{{i=1}}^{k}\alpha _{i}{f}_{i}{\Bigg |}k\geq 1,\,{f}_{i}\in E,\,\alpha _{i}\geq0,\,\sum _{{i=1}}^{k}\alpha _{i}=1\right\},
\]
is the set of all convex combinations of elements of $E$. The \emph{conic hull} of $E$,
\[
\Cone E = \left\{\sum _{{i=1}}^{k}\alpha _{i}{f}_{i}{\Bigg |}k \geq 1,\,{f}_{i}\in E,\,\alpha _{i}\geq0\right\},
\]
is the set of all conic combinations of elements of $E$. By
\[
\Span E = \left\{\sum _{{i=1}}^{k}\alpha _{i}{f}_{i}{\Bigg |}k \geq 1,\,{f}_{i}\in E,\,\alpha _{i}\in\R\right\}
\]
we denote the set of all linear combinations of elements of $E$, and we refer to it as the \emph{linear span} of $E$.

A set $E$ is called \emph{convex} if $\Co E=E$, a \emph{cone} if $E=E^+$ or $E=E^+\cup\{0\}$, a \emph{convex cone} if $E=\Cone E$ or $E=\Cone E\setminus\{0\}$, and  a \emph{linear space} if $E=\Span E$. If $E$ is convex, $ E^+ = \Cone  E\setminus\{0\}$ is a convex cone.

The \emph{epigraph} of $\Phi: E \To\R$ is the set in $\Span E \times \R$ given by
\[
  \Epi \Phi = \{ ({f},y) \,|\, {f}\in E , \, y\in\R,\, y\geq\Phi({f}) \}.
\]
The \emph{graph} of $\Phi$ is the set $\{({f},\Phi({f})) \,|\, {f}\in E\}$.

A function $\Phi: E\To\R$ is called \emph{convex} if its epigraph is a convex set. The definition implies that $E$ is convex. Therefore, $\Phi$ is convex if, for any ${f},{g}\in{E}$ and $\lambda\in(0,1)$, $\Phi$ satisfies
\begin{equation*}
  \Phi((1-\lambda){f}+\lambda {g}) \leq (1-\lambda) \Phi({f}) + \lambda\Phi({g}).
\end{equation*}
If the inequality is strict for ${f}\not={g}$, then $\Phi$ is called \emph{strictly convex}.

A function $\Phi:{E^+}\To\R$ is said to be \emph{(positively) homogeneous of degree $k$}, for $k\in\R$, or \emph{(positively)} $k$-\emph{homogeneous}, if for every $f\in E^+$ and every $\lambda>0$, it holds $\Phi(\lambda f) = \lambda^k \Phi(f)$. A function $\Phi: E\To\R$ is said to be \emph{subadditive} if $\Phi$ satisfies
\[
 \Phi( {f}+ {g}) \leq \Phi( {f}) + \Phi({g})
\]
for all ${f}, {g}\in E$, and \emph{strictly subadditive}, if the above inequality is strict for $f\not=g$. We need to modify slightly the latter definition in the case when $\Phi: E^+\To\R$ is 1-homogeneous. Then we say that $\Phi$ is \emph{strictly subadditive} if the above inequality is strict whenever $f,g\in E^+$ are not positively collinear. Functions that are 1-homogeneous and (strictly) subadditive are called \emph{(strictly) sublinear}. It is easy to see that $\Phi:E^+\To\R$ is (strictly) sublinear if and only if $\Phi$ is (strictly) convex on $E$ and 1-homogeneous on $E^+$.

Let $\mathcal P$ be a prediction set with respect to $(\Omega,\mathcal A, \mu)$ and let $E\subset\Span \mathcal P$. By $E^\perp$ we denote the \emph{annihilator} of $E$ in $\mathcal L(\mathcal P)$, that is, all $f\in\mathcal L(\mathcal P)$ such that
\begin{equation*}
  p \cdot f = 0
\end{equation*}
for all $p\in E$. Clearly, $E^\perp$ is a linear subspace of $\mathcal L(\mathcal P)$. In the case when $E^\perp=\{0\}$, we say that $E$ has a \emph{trivial annihilator}.

By a \emph{direction} in a vector space we understand the equivalence class of all positively collinear vectors to a given nonzero vector. Note that any 0-homogeneous function is a function of directions. For ${q}\in {\mathcal P^+}$, we define the \emph{set of directions} from ${q}$ to the points in ${\mathcal P^+}$ as
\begin{align*}
  \mathcal D({q}) &= \{{p} \in \Span {\mathcal P}\,|\, \exists \epsilon_{{p}}>0,\, \forall t\in[0,\epsilon_{{p}}],\, {q}+t{} {p}\in \mathcal P^+\}\\
       &= \{ {p}\in \Span {\mathcal P} \,|\, \exists \epsilon_{{p}}>0,\, {q}+\epsilon_{p} {p}\in \mathcal P^+\}.
\end{align*}
We have the latter identity due to the convexity of ${\mathcal P^+}$.

A point ${q}\in \mathcal P^+$ is an \emph{algebraically interior} point of $\mathcal P^+$ if ${\mathcal D}({q})=\Span {\mathcal P}$. The collection of all algebraically interior points of $\mathcal P^+$ is called the \emph{algebraic interior} of $\mathcal P^+$. In the case of a topological vector space, the topological interior of a set is always contained in the algebraic interior of the set. Moreover, when the topological interior is not empty, the two notions coincide. If $q$ is not algebraically interior for $\mathcal P^+$, that is,  ${\mathcal D}({q})\not=\Span {\mathcal P}$, we say that $q$ is a \emph{boundary point} for $\mathcal P^+$. If $\mathcal P^+$ has empty algebraic interior, then the prediction cone consists entirely of boundary points. This case occurs frequently in the context of continuous sample spaces, see e.g. Proposition \ref{prop: L1}.

\begin{lem}\label{lem: Dir}
For each ${q}\in \mathcal P^+$, we have the representation
\[
  {\mathcal D}( {q}) = \Cone(\mathcal P^+ -  {q}).
\]
\end{lem}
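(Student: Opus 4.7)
The plan is to establish the identity by double inclusion, using essentially only that $\mathcal{P}^+$ is a convex cone (which follows from the convexity of $\mathcal{P}$, as noted earlier in the paper).

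For the inclusion $\text{Cone}(\mathcal{P}^+ - q) \subseteq \mathcal{D}(q)$, I would first reduce a general conic combination to a single scaled translate: given $p = \sum_{i=1}^k \alpha_i(r_i - q)$ with $\alpha_i \geq 0$ and $r_i \in \mathcal{P}^+$, set $\alpha = \sum_i \alpha_i$. If $\alpha = 0$ then $p = 0$ and trivially $q + \epsilon p = q \in \mathcal{P}^+$ for any $\epsilon > 0$. Otherwise, $r := \sum_i (\alpha_i/\alpha) r_i$ is a convex combination of elements of $\mathcal{P}^+$, which belongs to $\mathcal{P}^+$ by convexity, and $p = \alpha(r - q)$. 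Then for $0 < \epsilon < 1/\alpha$ we have
\begin{equation*}
  q + \epsilon p = (1 - \epsilon\alpha)q + \epsilon\alpha\, r,
\end{equation*}
a convex combination of points in $\mathcal{P}^+$, hence in $\mathcal{P}^+$. This gives $p \in \mathcal{D}(q)$.

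For the reverse inclusion $\mathcal{D}(q) \subseteq \text{Cone}(\mathcal{P}^+ - q)$, take $p \in \mathcal{D}(q)$ and choose $\epsilon_p > 0$ with $r := q + \epsilon_p p \in \mathcal{P}^+$. Then
\begin{equation*}
  p = \tfrac{1}{\epsilon_p}(r - q),
\end{equation*}
which is manifestly a nonnegative scalar multiple of an element of $\mathcal{P}^+ - q$, hence in $\text{Cone}(\mathcal{P}^+ - q)$.

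There is no real obstacle here; the argument is a short exercise that only uses convexity of $\mathcal{P}$ and the definition of the prediction cone. The only point that requires a bit of care is the reduction step in $\supseteq$, where one must note that convex combinations of elements of $\mathcal{P}^+$ stay in $\mathcal{P}^+$ (since $\mathcal{P}^+ = \text{Cone}\,\mathcal{P} \setminus \{0\}$), and the zero vector must be handled separately to avoid dividing by $\alpha = 0$.
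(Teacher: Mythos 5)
Your proof is correct and follows essentially the same route as the paper's: the inclusion $\mathcal D(q)\subseteq\Cone(\mathcal P^+-q)$ is argued identically, and for the other inclusion you simply make explicit (via the reduction $p=\alpha(r-q)$ and the convex combination $(1-\epsilon\alpha)q+\epsilon\alpha r$) the step the paper summarizes as ``$\mathcal D(q)$ is closed under conic combinations and contains $\mathcal P^+-q$.'' No gaps; your version just fills in the details the paper leaves to the reader.
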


For a point $q\in\mathcal P^+$, we define $\mathcal O(q) = {\mathcal D}( {q}) \cap -{\mathcal D}( {q})$. This is the subset of directions in $\mathcal D(q)$ whose inverse is also in $\mathcal D(q)$. The set may be identified with these directions in $\Span \mathcal P$ along which there is an open line segment that contains $q$ and is contained in $\mathcal P^+$. Clearly, $q$ is algebraically interior for $\mathcal P^+$ if and only if $\mathcal O(q)=\mathcal D(q)=\Span\mathcal P$. By construction, $\mathcal O(q)$ is a linear subspace of $\Span \mathcal P$. The sets of directions $\mathcal D(q)$ and $\mathcal O(q)$ are instrumental for defining various notions of directional derivatives.

The most basic directional derivative is the following one.

\begin{defn}\label{defn: one-sided grad}
For a function $\Phi: \mathcal P^+\To\R$, the \emph{right directional derivative} of $\Phi$ at $q\in\mathcal P^+$ along $p\in\mathcal D(q)$ is defined as
\begin{equation}\label{eq: right}
  \Phi_+'(p, q) = \lim_{t\To 0^+} \frac{\Phi(q+t p)-\Phi(q)}{t}
\end{equation}
if the limit exists.
\end{defn}

We gather below the main properties of $\Phi_+'(p, q)$.

\begin{prop}\label{prop: Phi+}
Let $\Phi:\mathcal P^+\To\R$ be a sublinear function and $q\in\mathcal P^+$. We have
\begin{enumerate}[(a)]
\item  for each $p\in\mathcal D(q)$,
\begin{equation*}
  \Phi_+'(p, q) = \inf_{t>0} \frac{\Phi(q+t p)-\Phi( q)}{t}\in\R\cup\{-\infty\},
\end{equation*}
and the infimum is finite for $p\in\mathcal O(q)$;
\item
$\Phi_+'(\cdot,q):\mathcal D(q)\To\R\cup\{-\infty\}$ is sublinear;
\item for each $\lambda>0$, $\Phi_+'({} p,\lambda {} q)=\Phi_+'({} p,{} q)$;
\item for each $p\in\mathcal P^+$, 
\begin{align*}
  \Phi( p) \geq {\Phi_+'}( p,  q),
\end{align*}
with equality for $p=q$;
\item for each $p\in\mathcal O(q)$, $-\Phi_+'(-p,q)\leq\Phi_+'(p,q)$;
\item the set
\begin{equation*}
  \mathcal O'(q) = \{p\in\mathcal O(q)\,|\, -\Phi_+'(-p,q)=\Phi_+'(p,q)\}
\end{equation*}
is a linear subspace of $\mathcal O(q)$ and the restriction $\Phi_+'(\cdot,q)\big|_{\mathcal O'(q)}$ is linear.
\end{enumerate}
\end{prop}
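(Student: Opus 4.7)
The plan is to run the standard calculus of difference quotients of convex functions, exploiting the 1-homogeneity of $\Phi$ wherever it simplifies matters. Fix $q\in\mathcal P^+$ and $p\in\mathcal D(q)$, and write $\psi(t) = (\Phi(q+tp) - \Phi(q))/t$, defined on some interval $(0,\epsilon_p]$ with $q+\epsilon_p p\in\mathcal P^+$. For part (a), the key observation is that $\psi$ is nondecreasing on its domain: for $0<s<t\leq\epsilon_p$, convexity applied to the identity $q+sp = (1-s/t)q + (s/t)(q+tp)$ yields $\psi(s)\leq\psi(t)$. Hence the limit in (\ref{eq: right}) coincides with $\inf_{t>0}\psi(t)$. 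For $p\in\mathcal O(q)$, both $\pm p\in\mathcal D(q)$, so there are $t_1,t_2>0$ with $q+t_1 p,\,q-t_2 p\in\mathcal P^+$; applying convexity to $q$ expressed as a convex combination of these two points produces a finite lower bound for $\psi(t_1)$, and therefore for the infimum.

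Part (b) has two pieces. Positive homogeneity of $\Phi_+'(\cdot,q)$ follows from the substitution $s=\lambda t$ in the limit. Subadditivity uses the identity $q+t(p_1+p_2) = \tfrac12(q+2tp_1) + \tfrac12(q+2tp_2)$, convexity of $\Phi$, and dividing by $t$ before letting $t\to 0^+$; here we use Lemma \ref{lem: Dir} to ensure $p_1+p_2\in\mathcal D(q)$ and that the difference quotients are eventually defined. Part (c) is a one-line computation: by 1-homogeneity of $\Phi$, $\Phi(\lambda q+tp) = \lambda\Phi(q+(t/\lambda)p)$, and the substitution $s=t/\lambda$ collapses the two limits. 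For part (d), sublinearity of $\Phi$ gives $\Phi(q+tp)\leq \Phi(q)+t\Phi(p)$, so $\psi(t)\leq\Phi(p)$ for all admissible $t$; taking the infimum from (a) yields the claim (noting $p\in\mathcal P^+$ implies $p\in\mathcal D(q)$ since $\mathcal P^+$ is a convex cone). Equality at $p=q$ follows because $\Phi(q+tq) = (1+t)\Phi(q)$, making the difference quotient identically $\Phi(q)$.

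Part (e) is then an immediate consequence of (b): the zero direction is admissible with $\Phi_+'(0,q)=0$, and subadditivity gives $0 = \Phi_+'(p+(-p),q) \leq \Phi_+'(p,q) + \Phi_+'(-p,q)$, which rearranges to the desired inequality. For part (f), fix $p_1,p_2\in\mathcal O'(q)$. By (b),
\begin{equation*}
\Phi_+'(p_1+p_2,q) \leq \Phi_+'(p_1,q) + \Phi_+'(p_2,q),
\end{equation*}
and, using the defining equality of $\mathcal O'(q)$,
\begin{equation*}
\Phi_+'(-(p_1+p_2),q) \leq \Phi_+'(-p_1,q) + \Phi_+'(-p_2,q) = -\Phi_+'(p_1,q) - \Phi_+'(p_2,q).
\end{equation*}
Adding these gives $\Phi_+'(p_1+p_2,q) + \Phi_+'(-(p_1+p_2),q) \leq 0$, while (e) supplies the reverse inequality; equality is forced in both lines, which simultaneously shows $p_1+p_2\in\mathcal O'(q)$ and the additivity $\Phi_+'(p_1+p_2,q) = \Phi_+'(p_1,q)+\Phi_+'(p_2,q)$ of the restriction. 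Closure of $\mathcal O'(q)$ under scalar multiplication is immediate: the condition $-\Phi_+'(-p,q)=\Phi_+'(p,q)$ is symmetric in $p$ and $-p$, and positive homogeneity from (b) handles $\lambda>0$. The main obstacle, if any, is orchestrating the squeeze in (f) so as to extract both the subspace property and linearity from a single combination of the subadditivity bounds with (e).
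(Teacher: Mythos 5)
Your proof is correct and follows essentially the same route as the paper's: monotone difference quotients of the convex restriction for (a) and (d), the splitting $q+t(p_1+p_2)=\tfrac12(q+2tp_1)+\tfrac12(q+2tp_2)$ for subadditivity, substitutions for the homogeneity claims in (b) and (c), and the squeeze against (e) to force equality in (f). The only cosmetic differences are that you add the two subadditivity inequalities in (f) where the paper chains them, and in (d) you bound the difference quotient by $\Phi(p)$ directly rather than via $\Phi(q+p)-\Phi(q)$; both are equivalent to the paper's steps.
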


We next consider the other two types of directional derivatives. First, if we take the limit \eqref{eq: right} with the restriction $t\leq0$ instead $t\geq 0$, we obtain the \emph{left directional derivative} of $\Phi$, denoted $\Phi_-'(\cdot,q)$. It is easy to see that $\Phi_-'(\cdot,q)$ can be defined on $\mathcal O(q)$ and we have $\Phi_-'(p,q) = -\Phi_+'(-p,q)$, for each $p\in\mathcal O(q)$. Thus part (e) above can be rewritten as
\[
    \Phi_-'(p,q) \leq \Phi_+'(p,q)
\]
for all $p\in\mathcal O(q)$. On the subspace $\mathcal O'(q)$ introduced above in part (f), we have that
\[
  \Phi_-'(\cdot, q) = \Phi_+'(\cdot, q)
\]
is in fact the \emph{two-sided directional derivative} of $\Phi$ at $q$, denoted $\Phi'(\cdot,q)$. The latter can be defined as the limit \eqref{eq: right} without any restriction on $t$. In the most important case in practice, we have that $\mathcal O(q)=\mathcal O'(q)$. If in addition $\mathcal O(q)\not=\Span\mathcal P$, then $\Phi$ has no standard functional derivative. For an illustration of this fact in the context of Shannon and {\Hy } entropies, see  Section \ref{sect: exmpl}.

By $\Lin \mathcal P$ we denote the space of all real-valued linear functionals on $\Span\mathcal P$, i.e., the algebraic dual of $\Span\mathcal P$. By ``$\cdot$" we denote the bilinear pairing on $\Span\mathcal P \times \Lin \mathcal P$, so if $q\in\Span\mathcal P$ and $q^*\in\Lin \mathcal P$, $q\cdot q^*$ is the value of $q^*$ at $q$.

Let $\Phi:\mathcal P^+\To\R$ be 1-homogeneous. We say that $q^*\in\Lin\mathcal P$ is a \emph{subgradient} of $\Phi$ at $q$ if
\begin{equation*}
  \Phi(p) \geq  p \cdot q^*
\end{equation*}
for all $p\in\mathcal P^+$, with equality for $p=q$. The collection of all subgradients of $\Phi$ at $q$ is called the \emph{subdifferential} of $\Phi$ at $q$ and is denoted by $\partial\Phi(q)$. A subgradient $q^*$ is \emph{strict} if and only if the inequality $\Phi(p) > p\cdot q^*$ holds for all $p\in\mathcal P^+$ not positively collinear with $q$.

If $h\in\Lin\mathcal P$, the \emph{hyperplane} $H$ in $\Span\mathcal P\times\R$ given by
\begin{equation*}
  z=p\cdot h,\quad \forall p\in\Span\mathcal P,
\end{equation*}
\emph{supports} $\Phi$ at $q$ if the epigraph of $\Phi$ lies above $H$, and $H$ contains the point $(q,\Phi(q))$. Clearly, $H$ supports $\Phi$ at $q$ if and only if $h\in\partial\Phi(q)$.

The following proposition describes the intimate connection between one-sided and two-sided directional derivatives and the subdifferential of a sublinear function.

\begin{prop}\label{prop: subdiff}
For a point $q\in\mathcal P^+$,  we have
\begin{enumerate}[(a)]
\item
$q^*\in \partial\Phi(q)$ if and only if
\begin{equation*}
  p \cdot q^* \leq \Phi_+'(p,q)
\end{equation*}
for all $p\in\mathcal P^+$, with equality for $p=q$;
\item if $\mathcal D(q)=\Span\mathcal P$ and $\Phi'(\cdot,q)$ exists on $\Span\mathcal P$, then $\partial\Phi(q)=\{\Phi'(\cdot,q)\}$;
\item if $\mathcal D(q)=\Span\mathcal P$ and $\Phi'(\cdot,q)$ does not exist on $\Span\mathcal P$, then $\partial\Phi(q)$ has multiple elements;
\item if $\mathcal D(q)\not=\Span\mathcal P$ and $\Phi_+'(p,q)$ is finite for all $p\in\mathcal P^+$, then $\partial\Phi(q)$ has multiple elements;
\item if $\mathcal D(q)\not=\Span\mathcal P$ and there is $p\in\mathcal P^+$ such that $\Phi_+'(p,q)=-\infty$, then $\partial\Phi(q)=\emptyset$.
\end{enumerate}
\end{prop}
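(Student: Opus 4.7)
My plan is to dispatch parts (a) and (e) as direct translations of the definitions, to use part (a) in (b)--(d) to reduce the question of (non-)uniqueness of subgradients to a question about linear functionals dominated by $\Phi_+'(\cdot, q)$, and then to apply the algebraic Hahn--Banach theorem in increasing levels of delicacy. For (a), the forward direction substitutes $p' = q + tp \in \mathcal P^+$ (for $p \in \mathcal P^+$ and small $t > 0$) into $\Phi(p') \geq p' \cdot q^*$, uses $q \cdot q^* = \Phi(q)$, divides by $t$, and passes to the infimum via Proposition \ref{prop: Phi+}(a); equality at $p = q$ comes from the 1-homogeneity calculation $\Phi_+'(q, q) = \Phi(q)$. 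The reverse direction chains $\Phi(p) \geq \Phi_+'(p, q) \geq p \cdot q^*$ using Proposition \ref{prop: Phi+}(d). Part (e) is then immediate: any candidate subgradient applied to the offending $p$ would, by (a), have to satisfy $p \cdot q^* \leq -\infty$, contradicting finiteness of a linear functional.

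For (b), the hypothesis $\mathcal D(q) = \Span\mathcal P$ places both $p$ and $-p$ in $\mathcal D(q)$ for every $p \in \Span\mathcal P$, while existence of $\Phi'(\cdot, q)$ throughout $\Span\mathcal P$ gives $\Phi_+'(-p, q) = -\Phi_+'(p, q)$. Applying (a) to $p$ and to $-p$ sandwiches $p \cdot q^* = \Phi_+'(p, q) = \Phi'(p, q)$ everywhere, identifying the unique subgradient with $\Phi'(\cdot, q)$, which is itself linear by Proposition \ref{prop: Phi+}(f) applied with $\mathcal O'(q) = \Span\mathcal P$. For (c), the failure of $\Phi'(\cdot, q)$ furnishes some $p_0 \in \Span\mathcal P$ with $\Phi_+'(p_0, q) > -\Phi_+'(-p_0, q)$, and $p_0$ is not collinear with $q$ since $\Phi'$ does exist along $q$. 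The key computation is the identity $\Phi_+'(sp_0 + tq, q) = \Phi_+'(sp_0, q) + t\Phi(q)$, obtained from 1-homogeneity of $\Phi$ by reparametrizing $\Phi((1+\epsilon t)q + \epsilon sp_0) = (1+\epsilon t)\Phi(q + \epsilon sp_0/(1+\epsilon t))$ inside the difference quotient. Together with 1-homogeneity of $\Phi_+'(\cdot, q)$ in its first argument, this shows that on the two-dimensional subspace $\R q + \R p_0$ the linear functional $\ell_c(sp_0 + tq) := sc + t\Phi(q)$ is dominated by $\Phi_+'(\cdot, q)$ for every $c$ in the non-degenerate interval $[-\Phi_+'(-p_0, q), \Phi_+'(p_0, q)]$. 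Since the hypothesis makes $\Phi_+'(\cdot, q)$ a finite sublinear function on all of $\Span\mathcal P$ (Proposition \ref{prop: Phi+}(a)--(b)), algebraic Hahn--Banach extends each $\ell_c$ to a linear functional on $\Span\mathcal P$ still dominated by $\Phi_+'(\cdot, q)$; by (a), each extension is a subgradient, and different values of $c$ yield subgradients differing on $p_0$.

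Part (d) is the most delicate, because $\Phi_+'(\cdot, q)$ is only sublinear on the proper convex subcone $\mathcal D(q) \subsetneq \Span\mathcal P$, so algebraic Hahn--Banach cannot be applied directly on the ambient linear space. My plan is first to extend $\Phi_+'(\cdot, q)$ to a finite sublinear function $\tilde\psi$ on $\Span\mathcal P$: since $\mathcal P^+ \subseteq \mathcal D(q)$ forces $\mathcal D(q)$ to generate $\Span\mathcal P$, and since the hypothesis combined with sublinearity on $\mathcal D(q)$ and the bound $\Phi_+'(p, q) \leq \Phi(p)$ propagates the finiteness of $\Phi_+'(\cdot, q)$ from $\mathcal P^+$ to all of $\Cone(\mathcal P^+ - q) = \mathcal D(q)$ (write $p = r - \mu q$ with $r \in \mathcal P^+$ and bound $\Phi_+'(p, q)$ both above and below via $\Phi_+'(r, q)$), such an extension exists by the standard extension theorem for sublinear functions on generating cones. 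Algebraic Hahn--Banach applied to $\ell(tq) = t\Phi(q)$ on $\R q$, dominated by $\tilde\psi$, then yields a subgradient via (a). For multiplicity one picks $p_0 \in \Span\mathcal P \setminus \mathcal D(q)$, observes that the value $\tilde\psi(p_0)$ is not pinned down by $\Phi_+'(\cdot, q)|_{\mathcal D(q)}$, and constructs two extensions $\tilde\psi_1, \tilde\psi_2$ with $\tilde\psi_1(p_0) \neq \tilde\psi_2(p_0)$; seeding the Hahn--Banach extension on the two-dimensional subspace $\R q + \R p_0$ so as to hit the prescribed values at $p_0$ then yields two distinct subgradients. The hard part will be verifying the sublinear extension theorem on $\mathcal D(q)$ and arranging the two Hahn--Banach extensions to be genuinely distinguished at $p_0$---this is where the algebraic structure of the prediction cone genuinely enters.
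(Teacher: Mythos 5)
Parts (a), (b), (c) and (e) of your proposal are correct and essentially match the paper's own argument. Your (a) and (e) are the same difference-quotient manipulations; your (b) recasts the paper's ``unique subgradient on each line through $q$'' argument as a two-sided sandwich (note that the inequality of (a) extends verbatim to all $p\in\mathcal D(q)$, which is what you implicitly use when you ``apply (a) to $-p$''); and your (c) is a more explicit version of the paper's Hahn--Banach argument, with the correct observation that the normalisation $\ell(q)=\Phi(q)$ can be enforced on the seed subspace --- in fact it comes for free, since $\Phi_+'(\pm q,q)=\pm\Phi(q)$ already pins down any dominated linear functional on $\R q$.

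The gap is in (d). You reduce it to a ``standard extension theorem for sublinear functions on generating cones'', i.e.\ that a finite sublinear function on a convex cone $K$ with $K-K=E$ extends to a finite sublinear function on $E$. No such theorem holds: the perspective function $\psi(x,y)=x^2/y$ is finite and sublinear on the generating cone $K=\{(x,y)\,|\,y>0\}$ of $\R^2$, yet any sublinear $\tilde\psi$ on $\R^2$ agreeing with $\psi$ on $K$ would have to satisfy $\tilde\psi(1,0)\geq\psi(x+1,1)-\psi(x,1)=2x+1$ for every $x$, which is impossible. So the finiteness of $\Phi_+'(\cdot,q)$ on all of $\mathcal D(q)=\Cone(\mathcal P^+-q)$ --- which you do establish correctly from finiteness on $\mathcal P^+$ --- does not deliver the extension, and the step you yourself flag as ``the hard part'' is exactly the one that fails in general. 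The paper sidesteps this entirely: its Theorem \ref{thm: Hahn-B} is the Hahn--Banach theorem for a sublinear function defined only on a convex cone $K\subset E$, with domination required only on $E_0\cap K$ and $E\cap K$. Applying that version directly with $K=\mathcal D(q)$ and seeding on $\R p_0$ for some $p_0\in\mathcal P^+\setminus\mathcal O(q)$ (nonempty: $\mathcal P^+\subseteq\mathcal O(q)$ would force the linear subspace $\mathcal O(q)$ to contain $\Span\mathcal P$ and hence $\mathcal D(q)=\Span\mathcal P$), the domination constraint acts only on the ray $\{tp_0\,|\,t\geq0\}$ and leaves $\ell(p_0)$ free in $(-\infty,\Phi_+'(p_0,q)]$; the equality $\ell(q)=\Phi(q)$ is again automatic. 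To repair your version of (d), either prove the extension lemma under the specific hypotheses in force (not merely cite it as standard), or replace the extension-then-extend strategy by the cone form of Hahn--Banach.
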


Part (a) above is the standard characterisation of the subdifferential of a sublinear function. Parts (b) and (c) give additional information in the case of algebraically interior points. Parts (d) and (e) do the same for boundary points. Notice that the latter imply the statement from the Introduction that at boundary points either the existence or uniqueness of subgradient fails. (See also Example \ref{exmpl: Hahn-B}.) In the next section, we show that uniqueness might be sometimes recovered at certain boundary points if we confine ourselves to a regularity class such as $\mathcal L(\mathcal P)$.

We next give a formal definition of a scoring rule and elaborate some of its implications.

\begin{defn}\label{defn: score}
Let $\mathcal P$ be a prediction set with respect to the measure space $(\Omega,\mathcal A,\mu)$. Any 0-homogeneous map $S:\mathcal P^+\To\mathcal L(\mathcal P)$ is called a \emph{scoring rule}.
\end{defn}

If $X$ is a random variable on $\Omega$ with unknown true density $p\in\mathcal P$, then for each predictive density $q\in\mathcal P^+$, $S(q)(X)$ is a random function of $X$. The condition $S(q)\in\mathcal L(\mathcal P)$ guarantees that the expectation of $S$ is always finite. The \emph{uncertainty function} associated to $S$ is the function $\Phi:\mathcal P^+\To\R$, $\Phi(p)=p\cdot S(p)$. Clearly, $\Phi$ is 1-homogeneous. When $S$ is proper, it is customary to call $\Phi$ an \emph{entropy function}. 

Suppose now that $S:\mathcal P^+\To \mathcal L(\mathcal P)$ is a proper scoring rule with entropy $\Phi$. The condition that the expected score of $S$ is maximised in $q$ at the true density $q=p$ means that $S$ satisfies the inequality
\begin{equation*}
  \Phi(p) \geq p\cdot S(q),
\end{equation*}
for each $p,q\in\mathcal P^+$, with equality for $q=p$.  If $S$ is strictly proper, then $p$ is the only maximiser up to a scaling factor. In this case, the inequality above is strict for any $q$ that is not positively collinear to $p$. So, the assumption of propriety is equivalent to $S$ being a subgradient of $\Phi$ on $\mathcal P^+$. Moreover, strict propriety corresponds to strict subgradients on $\mathcal P^+$. The existence of a subgradient on $\mathcal P^+$ implies that $\Phi$ is sublinear, see Lemma \ref{lem: subg-subl}. We conclude that (strictly) proper scoring rules are $\mathcal P$-integrable subgradients of (strictly) sublinear functions. Therefore, it is reasonable in the context of scoring rules to restrict the notion of subgradient to the class $\mathcal L(\mathcal P)\subset \Lin(\mathcal P)$. In the next section, and in particular in Theorem \ref{thm: exist} and Theorem \ref{thm: uniq}, we discuss the existence and uniqueness of $\mathcal P$-integrable subgradients.

In some special cases, we may add to our notion  of subgradient a topological structure. Let $\mathcal P^+$ be a prediction cone such that $\Span \mathcal P$ may be identified with a normed space $(N,\norm\cdot)$, and let the continuous dual of $N$, denoted $N^*$, be a subset of $\mathcal L(\mathcal P)$. Suppose that $\mathcal P^+\subset\mathcal C$, where $\mathcal C$ is an open convex cone in $N$, and $\Phi$ may be extended to $\mathcal C$ as a continuous sublinear function. 

We recall that $\Phi$ is  \emph{G\^ateaux differentiable}  at $q\in \mathcal C$ if there is $q^*\in N^*$ such that for every $p\in N$, the limit
\begin{equation*}
  p \cdot q^* = \lim_{t\To0} \frac{\Phi(q+tp)-\Phi(q)}{t}
\end{equation*}
exists. The functional $q^*$ is called the \emph{G\^ateaux derivative} of $\Phi$ at $q$ and is also denoted by $\nabla\Phi(q)$. Notice that by definition the G\^ateaux derivative is applicable only to interior points. See Theorems \ref{thm: N1} and \ref{thm: N2} for an answer to our two main questions.

If $\Phi$ is G\^ateaux differentiable at $q$, taking $p=q$ in the above limit, we recover \emph{Euler's homogeneous function} theorem
\begin{equation*}
  q\cdot \nabla \Phi(q) = \Phi(q).
\end{equation*}
More generally, if $\Phi$ is sublinear and has a subgradient $S$ on $\mathcal P^+$, then we have that $q\cdot S(q) = \Phi(q)$, for every $q\in\mathcal P^+$, \citep{HB}. The proof also follows from Proposition \ref{prop: subdiff} (a) and Proposition \ref{prop: Phi+} (d). This beautiful generalisation of Euler's theorem is only visible after extending $S$ and $\Phi$ to denormalised densities as homogeneous functions.

Suppose now that a scoring rule $S:\mathcal P\To\mathcal L(\mathcal P)$ is given. Then, setting
\begin{equation*}
  S(q) =  S\left(\frac q {q\cdot 1}\right)
\end{equation*}
for any $q\in\mathcal P^+$, extends $S$ as a 0-homogeneous function to the prediction cone. Here
\[
   q \cdot 1 = \int_{\Omega} q(x)d\mu(x)
\]
is the normalising constant of $q$. Similarly, let an entropy function $\Phi:\mathcal P\To\R$ be given. Setting
\begin{equation*}
  \Phi(q) = (q\cdot 1) \Phi\left(\frac q {q\cdot 1}\right)
\end{equation*}
for any $q\in\mathcal P^+$, extends $\Phi$ as a 1-homogeneous function to the prediction cone. See Section \ref{sect: exmpl} for an illustration. Working directly with denormalised predictive densities could also be advantageous in numerical computation \citep{Hyv, Hyv1, DawMus, DawMus1}.

\section{Main Results}\label{sect: main}

Our first result gives a necessary and sufficient condition for existence of a $\mathcal P$-integrable subgradient at a point. The result can be easily  generalised to subgradients on $\mathcal P^+$.

\begin{thm}\label{thm: exist}
Let $\Phi:\mathcal P^+\To\R$ be a sublinear function. Then $\Phi$ has a $\mathcal P$-integrable subgradient at a point $q\in\mathcal P^+$ if and only if there is $q^*\in\mathcal L(\mathcal P)$ such that
\begin{equation*}
  p\cdot q^* \leq \Phi_+'(p,q)
\end{equation*}
for all $p\in\mathcal P^+$, with equality for $p=q$.
\end{thm}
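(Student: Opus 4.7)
The plan is to recognize the theorem as the restriction of Proposition \ref{prop: subdiff}(a) to the subclass $\mathcal L(\mathcal P) \subset \Lin\mathcal P$. Each $q^* \in \mathcal L(\mathcal P)$ induces a linear functional on $\Span\mathcal P$ via $p \mapsto p \cdot q^* = \int_\Omega q^*(x) p(x)\, d\mu(x)$, which is finite on all of $\Span\mathcal P$ by the defining integrability property of $\mathcal L(\mathcal P)$. Under this embedding, a $\mathcal P$-integrable subgradient of $\Phi$ at $q$ is exactly an element of $\partial\Phi(q)$ that happens to lie in $\mathcal L(\mathcal P)$, so the two conditions in the theorem are the restrictions of the two equivalent conditions in Proposition \ref{prop: subdiff}(a) to this subclass.

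For concreteness, I would write the argument out directly in both directions. For the forward implication, suppose $q^* \in \mathcal L(\mathcal P)$ is a $\mathcal P$-integrable subgradient; for any $p \in \mathcal P^+$ and $t > 0$ the point $q + tp$ belongs to $\mathcal P^+$ by convexity of the cone, so $\Phi(q + tp) \geq (q + tp) \cdot q^* = \Phi(q) + t(p \cdot q^*)$, using the equality case $q \cdot q^* = \Phi(q)$. Rearranging and letting $t \to 0^+$ gives $p \cdot q^* \leq \Phi_+'(p, q)$, and the equality at $p = q$ follows from $\Phi_+'(q, q) = \Phi(q)$, a consequence of 1-homogeneity of $\Phi$. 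For the backward implication, I combine the hypothesis $p \cdot q^* \leq \Phi_+'(p, q)$ with Proposition \ref{prop: Phi+}(d), namely $\Phi_+'(p, q) \leq \Phi(p)$ on $\mathcal P^+$, to conclude $p \cdot q^* \leq \Phi(p)$; the equality case at $p = q$ is preserved along the chain through $\Phi_+'(q, q) = \Phi(q)$, and since $q^*$ is assumed to lie in $\mathcal L(\mathcal P)$, it qualifies as a $\mathcal P$-integrable subgradient.

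The main obstacle is essentially absent: the statement is a corollary of previously stated material, with Proposition \ref{prop: Phi+}(d) and 1-homogeneity supplying the only substantive ingredients. The value of the reformulation lies in reducing the question of existence of a $\mathcal P$-integrable subgradient to the solvability of a \emph{linear} majorization against the sublinear function $\Phi_+'(\cdot, q)$, a form well suited to the Hahn--Banach-type extension arguments that drive the existence results in the sequel.
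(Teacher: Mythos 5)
Your proposal is correct and follows essentially the same route as the paper: the backward implication is exactly the paper's appeal to Proposition \ref{prop: Phi+}(d), and your explicit forward argument (applying the subgradient inequality at $q+tp$ and letting $t\To 0^+$) is precisely the computation behind Proposition \ref{prop: subdiff}(a), which the paper cites for that direction. Nothing is missing.
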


In the light of Theorem \ref{thm: HB} and the above result, we call any sublinear function $\Phi$ an \emph{entropy} if $\Phi$ has a $\mathcal P$-integrable subgradient at each point of its domain. In most cases of practical interest, one may choose the prediction cone appropriately so that $\Phi'_+(\cdot,q)=q^*$ for some $q^*\in\mathcal L(\mathcal P)$. This means that $\Phi'_+(\cdot,q)$ is a $\mathcal P$-integrable subgradient of $\Phi$ at $q$ and that $\Phi'_+(\cdot,q)=\Phi'(\cdot,q)$ is also a two-sided directional derivative on the subspace $\mathcal O(q)$ of $\Span\mathcal P$. In our next result, we show that if $\mathcal O(q)$ is a sufficiently large subspace, then $\Phi'_+(\cdot,q)$ is the unique $\mathcal P$-integrable subgradient of $\Phi$ at $q$.

\begin{thm}\label{thm: uniq}
Let $\mathcal P$ be a prediction set and $\Phi:\mathcal P^+\To\R$ be a sublinear function. Suppose that at a point $q\in\mathcal P^+$  the subspace $\mathcal O(q)$ of $\Span\mathcal P$ has a trivial annihilator in $\mathcal L(\mathcal P)$. If there is a $q^*\in\mathcal L(\mathcal P)$ such that
\begin{equation}\label{eq: Phi+ q*}
  p\cdot q^* = \Phi_+'(p,q)
\end{equation}
for all $p\in\mathcal P^+$, then $q^*$ is the unique $\mathcal P$-integrable subgradient of $\Phi$ at $q$.
\end{thm}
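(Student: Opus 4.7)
The plan is to first verify that $q^*$ is indeed a $\mathcal P$-integrable subgradient of $\Phi$ at $q$, and then to show that any other $\mathcal P$-integrable subgradient must equal $q^*$ by exploiting the two-sided nature of directions in $\mathcal O(q)$ and the trivial annihilator hypothesis.

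For the first step, I would observe that since $\Phi$ is $1$-homogeneous, the difference quotient $[\Phi(q+tq)-\Phi(q)]/t$ equals $\Phi(q)$ for every admissible $t$, so $\Phi_+'(q,q)=\Phi(q)$. Consequently the assumed identity $p\cdot q^*=\Phi_+'(p,q)$ for all $p\in\mathcal P^+$ specialises at $p=q$ to $q\cdot q^*=\Phi(q)$, and in particular the inequality $p\cdot q^*\leq \Phi_+'(p,q)$ holds on $\mathcal P^+$ with equality at $q$. Proposition \ref{prop: subdiff}(a) then gives $q^*\in\partial\Phi(q)$.

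For the uniqueness step, I would let $r^*\in\mathcal L(\mathcal P)$ be any $\mathcal P$-integrable subgradient of $\Phi$ at $q$ and set $h=q^*-r^*\in\mathcal L(\mathcal P)$. Since both subgradients attain $\Phi(q)$ at $q$, we have $q\cdot h=0$. By Proposition \ref{prop: subdiff}(a) applied to $r^*$,
\begin{equation*}
  p\cdot r^*\leq \Phi_+'(p,q) = p\cdot q^*\quad\text{for all } p\in\mathcal P^+,
\end{equation*}
so $p\cdot h\geq 0$ on $\mathcal P^+$. Now fix an arbitrary direction $p\in\mathcal O(q)$. By definition of $\mathcal O(q)$, there exists $\varepsilon>0$ with $q\pm\varepsilon p\in\mathcal P^+$. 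Applying the nonnegativity of $h$ on $\mathcal P^+$ together with $q\cdot h=0$ yields
\begin{equation*}
  0\leq (q+\varepsilon p)\cdot h = \varepsilon\,(p\cdot h),\qquad 0\leq (q-\varepsilon p)\cdot h = -\varepsilon\,(p\cdot h),
\end{equation*}
which forces $p\cdot h=0$.

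Thus $h$ annihilates every element of $\mathcal O(q)$. Since by hypothesis $\mathcal O(q)\subset\Span\mathcal P$ has trivial annihilator in $\mathcal L(\mathcal P)$, we conclude $h=0$, i.e.\ $r^*=q^*$. The argument is essentially algebraic once the right quantities are compared; the only mildly subtle point, which I regard as the key step rather than a genuine obstacle, is recognising that the two-sided reach inside $\mathcal P^+$ along directions of $\mathcal O(q)$ is exactly what converts the one-sided subgradient inequality into the annihilator equality $p\cdot h=0$, at which point the trivial-annihilator hypothesis closes the proof.
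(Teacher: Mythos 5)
Your proof is correct and follows essentially the same route as the paper: establish that $q^*$ is a subgradient via Proposition \ref{prop: subdiff}(a), show that any other $\mathcal P$-integrable subgradient must agree with $q^*$ in every direction of $\mathcal O(q)$ by exploiting the two-sided reach of $\mathcal P^+$ along such directions, and then invoke the trivial-annihilator hypothesis. Your explicit $q\pm\varepsilon p$ computation is simply an unpacked version of the paper's appeal to one-dimensional restrictions of $\Phi$ along lines in $\mathcal O(q)$, and if anything it spells out more carefully the step the paper dismisses as following ``immediately.''
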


In the above result, the condition that $\mathcal O(q)$ has a trivial annihilator in $\mathcal L(\mathcal P)$ can be interpreted to say that the set of directions at which $q\in\mathcal P^+$ is boundary to the cone $\mathcal P^+$ is negligible. The latter condition represents an algebraic analogue to the property of $q$ being a \emph{quasi-interior point} of $\mathcal P^+$, which is better known in its topological forms presented in \cite{FullBraun, BorLew}. The collection of all quasi-interior points of $\mathcal P^+$ is the \emph{quasi-interior} of $\mathcal P^+$. As an illustration, in the next section we define Shannon and Hyv\"arinen entropies on positive cones with nonempty quasi-interiors. Presently, however, we do not investigate the proposed variant of quasi-interior in full. This analysis is not necessary for the application of Theorem \ref{thm: uniq} and may be a subject of future work. Notice also that uniqueness of subgradient is understood and valid only within the class $\mathcal L(\mathcal P)$.

We now consider the case of topological subgradients. Our main assumption is the following:
\begin{align}\label{eq: cont ass}
\begin{cases}
    \mathcal P^+\subset \mathcal C, \text{ where }\mathcal C \text{ is an open convex cone in a normed space } N\\
   \Phi: \mathcal C\To\R \text{ is a continuous sublinear function}.
\end{cases}
\end{align}

\begin{thm}\label{thm: N1}
If \eqref{eq: cont ass} holds, then $\Phi$ admits a subgradient $S: \mathcal C \To N^*$.
\end{thm}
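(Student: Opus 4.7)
The approach is to apply the geometric Hahn--Banach theorem to the epigraph of $\Phi$ in the normed space $N\times\R$. Since $\Phi$ is continuous and convex on the open set $\mathcal C$, the set $\Epi\Phi=\{(p,y)\in\mathcal C\times\R : y\geq\Phi(p)\}$ is convex and has nonempty interior: for any $p_0\in\mathcal C$, continuity of $\Phi$ at $p_0$ provides a neighbourhood $U\subset\mathcal C$ of $p_0$ on which $\Phi$ varies by less than $1/2$, whence $U\times(\Phi(p_0)+1/2,\Phi(p_0)+3/2)$ is an open subset of $\Epi\Phi$. For fixed $q\in\mathcal C$, the point $(q,\Phi(q))$ lies on the boundary of $\Epi\Phi$, since arbitrarily small downward perturbations $(q,\Phi(q)-\delta)$ escape $\Epi\Phi$. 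Separating the interior of $\Epi\Phi$ from $(q,\Phi(q))$ yields a nonzero continuous linear functional $(\ell,\alpha)\in N^*\times\R$ and a constant $c\in\R$ with
\[
  \ell(p)+\alpha y \geq c \quad\text{for all }(p,y)\in\Epi\Phi, \qquad \ell(q)+\alpha\Phi(q)=c.
\]

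The pivotal step is to show $\alpha>0$. If $\alpha<0$ then letting $y\to+\infty$ for any fixed $p\in\mathcal C$ violates the inequality. If $\alpha=0$ then $\ell(p)\geq \ell(q)$ for every $p\in\mathcal C$; since $\mathcal C$ is open and $\ell$ is continuous and linear, this forces $\ell\equiv 0$, contradicting $(\ell,\alpha)\neq 0$. Thus $\alpha>0$ and the functional $q^*:=-\ell/\alpha\in N^*$ satisfies
\[
  \Phi(p)\geq p\cdot q^* \quad\text{for all }p\in\mathcal C,
\]
with equality at $p=q$. Restricting to $p\in\mathcal P^+\subset\mathcal C$ shows $q^*\in\partial\Phi(q)$.

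To assemble these pointwise subgradients into a $0$-homogeneous map $S:\mathcal C\To N^*$, note that the $1$-homogeneity of $\Phi$ implies that $\partial\Phi$ is constant along open rays in $\mathcal C$: if $q^*\in\partial\Phi(q)$, then for every $\lambda>0$ we have $\lambda q\cdot q^*=\lambda\Phi(q)=\Phi(\lambda q)$ while $\Phi(p)\geq p\cdot q^*$ continues to hold on all of $\mathcal C$, so $q^*\in\partial\Phi(\lambda q)$. Hence I would select a representative on each ray by applying the construction above to a cross-section such as $\{q\in\mathcal C : \|q\|=1\}$ and extending by $S(\lambda q):=S(q)$ for $\lambda>0$; the resulting $S$ is $0$-homogeneous and serves as a subgradient of $\Phi$ on all of $\mathcal C$.

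The main obstacle is ruling out the vertical case $\alpha=0$ in the separation, and this is precisely where the continuity of $\Phi$ on the open cone $\mathcal C$ is essential; without it, one could not infer that a continuous linear functional bounded below on an open set must vanish identically. Everything else amounts to a standard application of Hahn--Banach plus the ray-constancy of subdifferentials of sublinear functions, the latter being implicit in Proposition \ref{prop: subdiff} together with Proposition \ref{prop: Phi+} (c).
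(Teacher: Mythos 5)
Your proposal is correct in substance, but it is worth noting that the paper does not prove Theorem \ref{thm: N1} at all: it identifies the statement as the supporting hyperplane theorem and cites the literature. What you have written is essentially the standard textbook proof of that cited result --- separate the point $(q,\Phi(q))$ from the interior of $\Epi\Phi$ (nonempty by continuity of $\Phi$ on the open set $\mathcal C$), rule out a vertical separating hyperplane using the openness of $\mathcal C$, and normalise. Your ray-constancy observation for assembling a $0$-homogeneous selection $S$ is also correct and is exactly the content of Proposition \ref{prop: Phi+}~(c) combined with the definition of the subdifferential. One expository quibble: continuity of $\Phi$ is not what rules out $\alpha=0$ --- that step uses only the openness of $\mathcal C$; continuity is what guarantees $\Int\Epi\Phi\neq\emptyset$ so that the separation theorem applies in the first place.

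There is one step you assert without justification. The separation gives $\ell(p)+\alpha\Phi(p)\geq c=\ell(q)+\alpha\Phi(q)$, which after dividing by $\alpha>0$ yields the \emph{affine} inequality $\Phi(p)\geq \Phi(q)+(p-q)\cdot q^*$, not the linear one $\Phi(p)\geq p\cdot q^*$ with equality at $q$ that the paper's definition of subgradient requires; the two coincide only if $c=0$, equivalently $\Phi(q)=q\cdot q^*$. This does hold, but it needs the $1$-homogeneity of $\Phi$: applying the affine inequality at $p=\lambda q$ gives $(\lambda-1)\bigl(\Phi(q)-q\cdot q^*\bigr)\geq 0$ for all $\lambda>0$, and letting $\lambda$ range on both sides of $1$ forces $\Phi(q)=q\cdot q^*$ (this is the Euler identity of Proposition \ref{prop: Phi+}~(d)). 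Add that one line and the argument is complete.
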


The result is generally known as the \emph{supporting hyperplane theorem}. For proof see e.g. \cite{NicPer, BVff, Zal, Rud}. Any subgradient $S: \mathcal C \To N^*$ of $\Phi$ may be identified with a proper scoring rule on $\mathcal P^+$ by restricting $S$ to $\mathcal P^+$.

\begin{thm}\label{thm: N2}
Assume \eqref{eq: cont ass}. Then, $\Phi$ is G\^ateaux differentiable on $\mathcal C$ if and only if $\Phi$ admits a unique  subgradient $S: \mathcal C\To N^*$. In this case $S=\nabla\Phi$ is the G\^ateaux derivative of $\Phi$.
\end{thm}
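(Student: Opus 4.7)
My approach would be to prove the two implications separately, both resting on the identification of subgradients via the right directional derivative $\Phi_+'(\cdot,q)$. Since $\mathcal C$ is open, the set of admissible directions $\mathcal D(q)$ equals all of $N$ at every $q\in\mathcal C$, so $\Phi_+'(\cdot,q)$ is defined on the full ambient space. Continuity of $\Phi$ at an interior point of its domain forces $\Phi$ to be locally Lipschitz there (a classical consequence for finite convex functions on a normed space); hence $\Phi_+'(\cdot,q)$ is a real-valued sublinear functional bounded in absolute value by $L\|\cdot\|$ for some $L>0$ depending on $q$.

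For the forward direction, if $\Phi$ is G\^ateaux differentiable at $q$ with derivative $\nabla\Phi(q)\in N^*$, then $\Phi_+'(p,q)=p\cdot\nabla\Phi(q)$ for every $p\in N$. For any subgradient $q^*\in N^*$ of $\Phi$ at $q$, dividing the subgradient inequality $\Phi(q+tp)\geq(q+tp)\cdot q^*$ by $t>0$ and letting $t\To 0^+$ yields $p\cdot q^*\leq\Phi_+'(p,q)=p\cdot\nabla\Phi(q)$; replacing $p$ by $-p$ gives the reverse inequality, so $q^*=\nabla\Phi(q)$. This establishes both uniqueness and the pointwise identification $S=\nabla\Phi$.

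The reverse direction carries the real content. Given $q\in\mathcal C$ and a fixed direction $p\in N$, the goal is to prove $\Phi_+'(p,q)=p\cdot S(q)$. Since $\Phi_+'(\cdot,q)$ is sublinear by Proposition \ref{prop: Phi+}(b) and dominated by $L\|\cdot\|$, the Hahn-Banach theorem extends the linear functional $tp\mapsto t\,\Phi_+'(p,q)$ defined on the line $\R p$ to a linear functional $q^*$ on all of $N$ satisfying $r\cdot q^*\leq\Phi_+'(r,q)$ for every $r\in N$; the same dominance bound forces $q^*\in N^*$. Chaining with Proposition \ref{prop: Phi+}(d), namely $\Phi_+'(r,q)\leq\Phi(r)$ for $r\in\mathcal C$, gives the subgradient inequality $r\cdot q^*\leq\Phi(r)$ on $\mathcal C$, while $\Phi_+'(\pm q,q)=\pm\Phi(q)$ from $1$-homogeneity pins down $q\cdot q^*=\Phi(q)$. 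Hence $q^*\in\partial\Phi(q)$, and uniqueness forces $q^*=S(q)$, yielding $\Phi_+'(p,q)=p\cdot S(q)$. Running the same argument with $-p$ produces $\Phi_-'(p,q)=p\cdot S(q)$, and the coincidence of one-sided derivatives delivers G\^ateaux differentiability with $\nabla\Phi(q)=S(q)$.

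The main obstacle is precisely this reverse step: the \emph{a priori} sublinear object $\Phi_+'(\cdot,q)$ must be converted into a continuous linear subgradient that attains its value along a prescribed direction. Hahn-Banach supplies the candidate, but establishing $q^*\in N^*$ requires the local Lipschitz estimate, while upgrading the directional inequality to the full subgradient inequality on $\mathcal C$ relies on the $1$-homogeneous estimate $\Phi_+'(\cdot,q)\leq\Phi$; only after uniqueness is invoked do opposing directional derivatives collapse onto a common linear form.
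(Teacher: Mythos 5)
Your proof is correct; note that the paper itself offers no proof of Theorem \ref{thm: N2} --- it is cited there as a standard result of convex analysis --- so there is no internal argument to compare against. What you have written is the standard proof, correctly adapted to the paper's framework: openness of $\mathcal C$ makes every direction admissible at every point, continuity plus convexity gives the local Lipschitz bound $\abs{\Phi_+'(p,q)}\leq L\norm p$, the forward implication follows from the squeeze $\Phi_-'(p,q)\leq p\cdot q^*\leq\Phi_+'(p,q)$ applied to an arbitrary subgradient $q^*$, and the reverse implication follows by Hahn--Banach extension of $tp\mapsto t\,\Phi_+'(p,q)$ from the line $\R p$ under the dominating sublinear function $\Phi_+'(\cdot,q)$, with the Lipschitz bound guaranteeing continuity of the extension, Proposition \ref{prop: Phi+}(d) upgrading the directional inequality to the subgradient inequality on $\mathcal C$, and uniqueness then forcing $\Phi_+'(p,q)=p\cdot S(q)=\Phi_-'(p,q)$. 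Two details are worth making explicit, though neither is a gap in substance: (i) the domination of the seed functional for $t<0$, i.e. $-\Phi_+'(p,q)\leq\Phi_+'(-p,q)$, is exactly Proposition \ref{prop: Phi+}(e) and should be cited at the point where you invoke Hahn--Banach on the full line $\R p$ rather than only the ray $t\geq 0$; (ii) in the forward direction, ``admits a unique subgradient'' also requires existence, which you obtain either from Theorem \ref{thm: N1} or by observing directly that $p\cdot\nabla\Phi(q)=\Phi_+'(p,q)\leq\Phi(p)$ for all $p\in\mathcal C$ with equality at $p=q$, so that $\nabla\Phi(q)$ itself lies in $\partial\Phi(q)$.
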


This is a standard result in convex analysis. See e.g. \cite{BVff, Zal}. See Example \ref{exmpl: sup} for an illustration of the case where the assumption $N^*\subset \mathcal L(\mathcal P)$ is not satisfied.

\section{Applications}\label{sect: exmpl}

In this section, we apply our main results to three important entropies: \emph{Shannon entropy}, \emph{{\Hy } entropy}, and \emph{quadratic entropy}. For each entropy, we investigate an appropriate domain with nonempty quasi-interior for which we show the existence of a unique subgradient. 

\subsection{Shannon Entropy}\label{sect:  Sha}

The \emph{Shannon entropy function} for densities on $\R^d$ is given by
\begin{equation}\label{eq: Sha}
  \Phi(p) = \int_{\R^d} p(x) \ln \frac {p(x)} {p\cdot 1} dx
\end{equation}
where $p(x)\geq0$ is assumed to be sufficiently regular. More facts about Shannon entropy may be found e.g. in \cite{B, D, PDL, DLP}.

We first show that Shannon entropy may only be defined for nonnegative functions in a natural way. The kernel of $\Phi$ is the function $\phi(t)=t\ln t$ for $t>0$ and $\phi(0)=0$. Clearly, $\phi(t)$ is strictly convex on $t\geq0$ since, for $t>0$, $\phi''(t)=1/t>0$, and $\phi$ is continuous at the endpoint $t=0$. Notice that $\phi(t)$ has a vertical tangent at $t=0$ since $\phi'(t)=\ln t+1$. We conclude that $\phi(t)$ cannot be extended as a convex function to $t<0$. This furnishes our claim.

The positive cone of $L^1(\R^d)$ comprises of all nonnegative functions in $L^1(\R^d)$ and is denoted by $L^1_+(\R^d)$. In Proposition \ref{prop: L1} we give a direct proof that $L^1_+(\R^d)$ is a nowhere dense subset of $L^1(\R^d)$. Since the domain of Shannon entropy is a subset of $L^1_+(\R^d)$, it too is a nowhere dense set.

We now proceed to find a suitable prediction set. For $a\geq d+1$, we set
\begin{align*}
  \mathcal P^+ = \left\{p \in C(\R)\,\Big|\, p(x)>0\,, \exists C_1,C_2>0\, : \right. \left. \, \frac {C_1} {(1+\abs x)^a} \leq {p(x)}\leq \frac {C_2} {(1+\abs x)^{d+1}}\right\}.
\end{align*}
Notice that $\mathcal L(\mathcal P) \subset L^1_{\Loc}(\R^d)$. Indeed, for any $f\in\mathcal L(\mathcal P)$ consider
\begin{equation*}
  p_t(x) = \begin{cases}
            1                       \qquad &0<\abs x<t\\
            \left(\frac{1+t}{1+\abs x}\right)^{d+1}     \qquad &t\leq \abs x.
         \end{cases}
\end{equation*}
Since $p_t\in\mathcal P^+$, the $\mathcal P$-integrability of $f$ implies that
\[
   \int_{\abs x\leq t} \abs{f(x)}dx < \infty
\]
for all $t>0$.

Let us next see that for any $q\in\mathcal P^+$, $\mathcal O(q)$ has a trivial annihilator in $\mathcal L(\mathcal P)$. Clearly, $\mathcal O(q)$ contains all $p\in\Span\mathcal P$ that have faster or equal decay at infinity compared to $q$. Suppose that $f\in \mathcal O(q)^\perp$. Choosing an appropriate approximation of the identity, $\{p_n\}$, $p_n\in\mathcal O(q)$, we get that $f*p_n(x)\To f(x)$ for every $x$ in the Lebesgue set of $f$. Hence $f=0$ a.e. on $\R^d$. We conclude that $\mathcal O(q)^\perp=\{0\}$.

After this preparation, we may now define $\Phi$ rigorously as the map from $\mathcal P^+$ to $\R$ given by \eqref{eq: Sha}. Strict convexity of $\Phi$ follows from the strict convexity of $t\ln t$, for $t\geq0$, while its 1-homogeneity is trivial. Therefore, $\Phi$ is strictly sublinear on $\mathcal P^+$. Let us compute the right directional derivative of $\Phi$.

For $q\in\mathcal P^+$ and $p\in\mathcal D(q)$, we set $q_t=q+tp$. We have
\begin{align*}
  \lim_{t\To0^+} \frac{\Phi(q+tp) - \Phi(q)}{t} & = \frac d {dt}\Bigg|_{t=0} \left(q_t \cdot \ln \frac{q_t}{q_t\cdot1}\right)\\
                                            & = p \cdot \ln \frac{q}{q\cdot 1} + q \cdot \left(\frac p q - \frac { p\cdot 1} {q\cdot 1}\right)\\
                                            &= p \cdot \ln \frac{q}{q\cdot 1}.
\end{align*}
Therefore,
\begin{equation*}
  \Phi_+'(p,q) = \int_{\R^d} p(x) \ln \frac {q(x)} {q\cdot 1} dx.
\end{equation*}
Clearly, the function
\[
   S(q)(x) = \ln \frac {q(x)} {q\cdot 1}
\]
is in $\mathcal L(\mathcal P)$. Indeed, the claim follows from the fact that $S(q)$ is continuous in $x$ and grows logarithmically as $\abs x\To\infty$. In view of Theorem \ref{thm: uniq}, $S$ is the unique $\mathcal P$-integrable subgradient of $\Phi$ on $\mathcal P^+$ since $\Phi_+'(p,q)=p\cdot S(q)$ for every $p,q\in\mathcal P^+$. The map is known as the \emph{logarithmic scoring rule}.

The uniqueness of the logarithmic scoring rule as a subgradient of Shannon entropy is in no way an absolute fact. Using the Hahn-Banach theorem as illustrated in Example \ref{exmpl: Hahn-B} and the fact that $L^1_+(\R^d)$ consists entirely of boundary points, one may construct other subgradients of $\Phi$ that lie outside $\mathcal L(\mathcal P)$. Moreover, if $q$ lies on the \emph{quasi-boundary} of $\mathcal P^+$ (i.e. the points where the condition $\mathcal O(q)^\perp=\{0\}$ is violated), then uniqueness will fail even within $\mathcal L(\mathcal P)$.

\subsection{{\Hy } Entropy}\label{sect: Fi}

\emph{{\Hy }  entropy} for densities on $\R^d$ is defined as
\begin{equation}\label{eq: Fi}
  \Phi(p)=\int_{\R^d} \frac{\abs{\nabla p(x)}^2}{p(x)}dx.
\end{equation}
Here $\nabla$ is the gradient on $\R^d$. {\Hy } and related entropies are considered e.g. in \cite{PDL,EG, ForbLau, DawMus, Hyv, Hyv1, Sanch}.

We first show that there is no natural way to extend {\Hy } entropy to signed densities. For simplicity, we confine ourselves to the case $d=1$. Suppose that $p$ changes sign at some $x_0\in\R$ that has multiplicity one. The assumption is generic and it means that $x_0$ is not an inflection point of $p$. It follows that the above integral is divergent at $x_0$. Indeed, the claim is a direct consequence of the asymptotic expansion of the term
\[
   \frac{\abs {p'(x)}^2}{p(x)} = \frac 1 {x-x_0} + O(x-x_0)
\]
near $x_0$. On the other hand, if $p$ has a zero of higher multiplicity at $x_0$, one may check that the above asymptotics will be bounded and the integral will be convergent in a neighbourhood of $x_0$. Nevertheless, the example shows that $\Phi$ cannot be generally defined for densities that change sign.

We proceed to define a suitable domain for $\Phi$. Suppose that $\mathcal P^+$ consists of all positive, twice continuously differentiable functions $p(x)$ on $\R^d$ that satisfy the bounds:
\begin{enumerate}[(a)]
  \item there are $C_1>0$ and $k>0$ such that
  \begin{equation*}
    \abs{\frac{\nabla p(x)}{p(x)}} + \abs{\frac{\Delta p(x)}{p(x)}} \leq C_1 (1+\abs x)^k;
  \end{equation*}
  \item there is $C_2>0$ such that
  \begin{equation*}
    \abs{p(x)} \leq  \frac {C_2} {(1+\abs x)^{d+1+k^2}},
  \end{equation*}
\end{enumerate}
where $\Delta=\partial^2/\partial x_1^2+\cdots+\partial^2/\partial x_d^2$ is the \emph{Laplacian} on $\R^d$. In view of the above, we have the following limit
\begin{equation}\label{eq: lim 1}
  \lim_{R\To\infty} \frac 1 R \int_{\abs{y}=R}  \left(\frac{y \nabla q(y)}{q(y)} \right)p(y)dy = 0
\end{equation}
for any $p,q\in\mathcal P^+$. Note that here
\[
  y\nabla q(y)= y_1\frac{\partial q(y)}{\partial y_1}+\cdots+ \frac{y_d\partial q(y)}{\partial y_d}
\]
denotes the \emph{scalar product} of $y$ and $\nabla q(x)$ and the integral in \eqref{eq: lim 1} is a surface integral over the sphere centred at the origin of radius $R$. The class $\mathcal P$ is broad, e.g. it contains the Gaussians, and all positive continuous densities that have bounded first and second-order derivatives and decay at infinity sufficiently fast. Just like in Section \ref{sect:  Sha}, we have that $\mathcal L(\mathcal P)\subset L^1_{\Loc}(\R^d)$ and that for any $q\in\mathcal P^+$ the annihilator of $\mathcal O(q)$ in $\mathcal L(\mathcal P)$ is trivial. In the light of Proposition \ref{prop: L1}, $\mathcal P^+$ is nowhere dense in $L^1(\R^d)$ as $\mathcal P^+\subset L^1_+(\R^d)$.

We now formally define {\Hy } entropy as the map from $\mathcal P^+$ to $\R$ given in \eqref{eq: Fi}. Convexity of $\Phi$ follows from the convexity of the function
\[
\phi(t,t_1,\dots,d_d)=\frac{t_1^2+\cdots+t_d^2} t,\quad \text{ for } t>0,\,(t_1,\dots,t_d)\in\R^d,
\]
while its 1-homogeneity is trivial. Hence, $\Phi$ is sublinear. Let us compute its right directional derivative.

For $q\in\mathcal P^+$ and $p\in\mathcal D(q)$, we set $q_t=q+tp$. We have
\begin{align*}
  \lim_{t\To0^+} \frac{\Phi(q+tp) - \Phi(q)}{t} & = \int_{\R^d} \frac d {dt}\Bigg|_{t=0} \left(\frac  {\abs{\nabla q_t(x)}^2} {q_t(x)} \right)dx\\
                                            & = \int_{\R^d}\left( 2\frac{ \nabla q(x)} {q(x)} \frac {\nabla p(x)}{p(x)} - \frac{\abs{\nabla q(x)}^2}{ q^2(x)}\right)p(x)dx.
\end{align*}
By integration by parts we get
\begin{multline*}
  \int_{\abs x \leq R}\left( \frac{2 \nabla q(x){\nabla p}(x)} {q(x)}  - \frac{\abs{\nabla q(x)}^2}{ q^2(x)}{{ p}}(x)\right)dx\\
                = \int_{\abs x \leq R} \left(-\frac{2 \Delta q(x)}{q(x)} + \frac{\abs{ \nabla q(x)}^2}{ q^2(x)}\right) p(x)dx + \frac 2 R \int_{\abs y = R} \left( \frac{ y \nabla q(y)}{q(y)}  \right) {p}(y)dy.
\end{multline*}
Letting $R\To\infty$ and using \eqref{eq: lim 1}, we obtain
\begin{equation*}
  \Phi_+'(p,q) = \int_{\R^d} \left(-\frac{2 \Delta q(x)}{q(x)} + \frac{\abs{ \nabla q(x)}^2}{ q^2(x)}\right) p(x)dx.
\end{equation*}
The assumptions on $\mathcal P^+$ guarantee that
\[
   S(q)(x) = -\frac{2 \Delta q(x)}{q(x)} + \frac{\abs{ \nabla q(x)}^2}{ q^2(x)}
\]
is $\mathcal P$-integrable for every $q\in\mathcal P^+$. In view of Theorem \ref{thm: uniq}, $S(q)$ is the unique $\mathcal P$-integrable subgradient of $\Phi$ on $\mathcal P^+$. The map is known as the \emph{H\"yvarinen scoring rule} \citep{PDL}.

In fact, $S(q)$ is a strict subgradient of $\Phi$ on $\mathcal P^+$. This can be shown if we notice that the divergence induced by $S$ has the representation
\begin{align*}
  p \cdot S(p) - p \cdot S(q) = \int_{\R^d} \abs{\frac {\nabla p(x)}{p(x)} - \frac {\nabla q(x)}{q(x)}}^2 p(x)dx.
\end{align*}
The latter identity can be proved by integration by parts. The divergence is zero if and only if
\[
   \nabla (\ln p(x) - \ln q(x))=0.
\]
This is equivalent to $p=Cq$ for some constant $C>0$, i.e., $p$ and $q$ being positively collinear. This concludes the proof of the claim.

\subsection{Quadratic Entropy}\label{sect: Qua}

Here we consider the \emph{quadratic entropy}
\begin{equation}\label{eq: qdr}
  \Phi(q) = \frac 1 {q \cdot 1} \int_{\Omega} q^2(x)dx,
\end{equation}
where $(\Omega,\mathcal A, \mu)$ is a Lebesgue measure space with $\Omega\subset\R^d$. In what follows, we show that its G\^ateaux derivative is the \emph{quadratic scoring rule}, also known as \emph{Brier score}. The quadratic entropy is a member of the important family of \emph{power entropy functions}. The corresponding \emph{power scoring rules} have been studied in connection to robust estimation e.g. in \cite{BHHJ, KF, KF2}. 

We proceed to choose a suitable domain for $\Phi$. In contrast to the previous two entropies we now introduce a topology. To that end, we begin with a description of some normed spaces. Let $w:\Omega\To[0,\infty)$ be a measurable function which we call a \emph{weight}. By $L^p(\Omega,w)$, for $p\geq1$, we denote the Lebesgue space of functions on $\Omega$ whose $p$-th power is absolutely integrable with respect to the weight $w(x)$. By $\norm \cdot_{p,w}$ we denote the corresponding weighted $L^p$-norm. When $w$ is identically equal to one we get the usual Lebesgue space and norm. In this case we drop $w$ from our notation. We now set
\[
   w(x) = (1 + \abs x)^{ d + 1}.
\]
Notice that $L^2(\Omega,w)$ embeds continuously in $L^1(\Omega)$. Indeed, for $f\in L^1(\Omega)$, we have
\begin{align*}
 \int_\Omega \abs{f(x)}dx  &=  \int_\Omega w^{-1/2}(x)\abs{f(x)}w^{1/2}(x)dx \\
                           &\leq  \left(\int_\Omega w^{-1}(x)dx\right)^{\frac 1 2} \left(\int_\Omega \abs{f(x)}^2w(x)dx\right)^{\frac 1 2}\\
                           &\leq C \norm{f}_{2,w},
\end{align*}
where $C>0$ is a constant. Clearly, $L^2(\Omega,w)$ also embeds continuously in $L^2(\Omega)$ and hence the same conclusion holds for $L^2(\Omega,w)$ for all intermediate  spaces $L^p(\Omega)$ with $1\leq p\leq 2$. Hence, we have the inequality
\begin{equation*}
  \norm{f}_p \leq C \norm{f}_{2,w}
\end{equation*}
for some fixed $C>0$ and all $p\in[1,2]$.

We have that $f\in L^2(\Omega,w)$ if and only if $fw^{1/2}\in L^2(\Omega)$. Clearly, the weight is needed only when $\Omega$ is unbounded as otherwise the weighted and the ordinary $L^p$-norms are equivalent. The continuous dual space of $L^2(\Omega,w)$ may be identified with the space $L^2(\Omega,w^{-1})$. Therefore, $g\in L^2(\Omega,w^{-1})$ if and only if $gw^{-1/2}\in L^2(\Omega)$. Hence, the dual space $L^2(\Omega,w^{-1})$ contains the constants and also the elements of $L^2(\Omega,w)$.

We now specify a prediction set $\mathcal P\subset L_+^2(\Omega,w)$ with the following property: there are constants $k_1>0$ and $k_2>0$ such that
\begin{equation*}
  k_1 \leq \norm q_{2,w} \leq k_2
\end{equation*}
for all $q\in\mathcal P$. Choose $0<\epsilon<\min(1,k_1)$. For $p\in L^2(\Omega)$, let $B_\rho(p)$ denote the open ball  about $p$ of radius $\rho>0$. Choose $\delta>0$ so small that for every $p\in B_\delta(0)$ we have
$\norm p_1 \leq \epsilon$ and $\norm p_{2,w} \leq \epsilon$. Let $q\in\mathcal P$ and consider $r\in B_\delta(q)$. It is easy to show that
\begin{equation*}
  k_1 - \epsilon \leq \norm r_{2,w} \leq k_2 + \epsilon
\end{equation*}
for all $r\in B_\delta(q)$. Similarly, we also have
\begin{equation*}
  1 - \epsilon \leq r \cdot 1 \leq 1 + \epsilon
\end{equation*}
for all $r\in B_\delta(q)$. Here we have used the fact that $r=p+q$, where $q\cdot 1 = 1$ and $\norm p_1\leq \epsilon$. We now set
\[
    \mathcal C_0 = \mathcal P + B_\delta(0) = \cup_{q\in\mathcal P} B_\delta(q).
\]
It follows that $\mathcal C_0$ is convex as both $\mathcal P$ and $B_\delta(0)$ are convex. Finally, let $\mathcal C=\mathcal C_0^+$ be the cone of $\mathcal C_0$. Clearly, $\mathcal C$ is an open convex cone in $L^2(\Omega,w)$.

We may now formally define $\Phi$ as the map from $\mathcal C$ to $\R$ given by \eqref{eq: qdr}. We have that $\Phi$ is strictly convex on $\mathcal C_0$ as the kernel function $\phi(t)=t^2$ is strictly convex for $t\in\R$. Therefore, $\Phi$ is strictly sublinear on $\mathcal C$. It is not hard to see that $\Phi$ is also continuous on $\mathcal C$. Theorem \ref{thm: N1} implies that $\Phi$ has a subgradient on $\mathcal C$. The following computation shows that $\Phi$ is G\^ateaux differentiable. Indeed, for $q\in\mathcal C$ and $p\in L^2(\Omega,w)$, we have
\begin{align*}
  \lim_{t\To 0} \frac{\Phi( q + t  p)-\Phi( q)}{t} = \int_\Omega \frac{d}{dt}\Bigg|_{t=0}\frac{{( q(x)+t{ p}(x))^2}}{( q+tp)\cdot 1}dx\\
                = 2\int_\Omega \frac{ q(x) { p}(x)}{q\cdot 1} dx - \int_\Omega \frac{ q^2(x)}{(q\cdot 1)^2}dx\int_{\Omega}{ p}(x)dx.
\end{align*}
We obtain that
\begin{equation*}
  \nabla\Phi(q) =\frac {2q} {q\cdot 1}  - \frac {q\cdot q}{(q\cdot 1)^2}
\end{equation*}
is the G\^ateaux derivative of $\Phi$ as clearly $\nabla\Phi(q)\in L^2(\Omega,w^{-1})$.
In view of Theorem \ref{thm: N2}, $S=\nabla\Phi|_{\mathcal P^+}$ defines a strictly proper scoring rule on $\mathcal P^+$. We have that $\nabla\Phi$ is the unique subgradient of quadratic entropy on the cone $\mathcal C$, but as discussed before, by using the Hahn-Banach theorem one may show that uniqueness fails on $\mathcal P^+$ when $\Omega$ is unbounded. The rule $S$ is known as the \emph{quadratic scoring rule}.

\section{Conclusion}\label{sect: conc}

We were originally motivated to understand the implications of the fact that Shannon and {\Hy } entropies are only finite on domains with empty interiors. As no notion of functional derivative is applicable to these entropies, the question whether the logarithmic and {\Hy } scoring rules are the unique subgradients of their respective entropy functions is not obvious. In contrast, the quadratic entropy may be continuously extended to signed densities, which allows us to interpret the quadratic scoring rule as the G\^ateaux derivative of its entropy. We realised that in order to answer the titular question of the paper, one must introduce additional structures to the basic measure-theoretic framework known in the literature of scoring rules \citep{HB}. The most important new aspect is the notion of interior and its refinement (known as quasi-interior) in the context of domains with empty interior. Another crucially important idea is to use directional derivatives to describe the subdifferentials of entropy functions. Finally, our approach marks a shift in emphasis from proper scoring rules to a greater focus on entropy functions.

\appendix
\counterwithin{thm}{section}

\section{Proofs}\label{app: proofs}

\begin{lem}\label{lem: subg-subl}
Let $\mathcal P$ be a prediction set and $\Phi:\mathcal P^+\To\R$ be a 1-homogeneous function. If $\Phi$ has a (strict) subgradient on $\mathcal P^+$, then $\Phi$ is a (strictly) sublinear function.
\end{lem}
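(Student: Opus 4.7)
The plan is to show that $\Phi$ is subadditive (respectively strictly subadditive in the appropriate sense), since $1$-homogeneity is given by hypothesis, and sublinearity on $\mathcal P^+$ is just the conjunction of these two properties. Denote by $S(q)$ a subgradient of $\Phi$ at each $q\in\mathcal P^+$, so that $\Phi(p)\geq p\cdot S(q)$ for all $p\in\mathcal P^+$, with equality at $p=q$; in particular $q\cdot S(q)=\Phi(q)$.

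The central observation, which I would make right away, is that $\mathcal P^+$ is a convex cone, so for any $p,q\in\mathcal P^+$ the sum $r:=p+q$ again lies in $\mathcal P^+$. Applying the subgradient inequality at $r$ separately to the points $p$ and $q$ gives
\begin{equation*}
   \Phi(p)\;\geq\;p\cdot S(r),\qquad \Phi(q)\;\geq\;q\cdot S(r).
\end{equation*}
Adding these and using bilinearity of the pairing together with the equality case $\Phi(r)=r\cdot S(r)$ yields
\begin{equation*}
   \Phi(p)+\Phi(q)\;\geq\;(p+q)\cdot S(r)\;=\;r\cdot S(r)\;=\;\Phi(p+q),
\end{equation*}
which is exactly subadditivity. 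Combined with the assumed $1$-homogeneity, this establishes that $\Phi$ is sublinear.

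For the strict version, suppose $S$ is a strict subgradient and let $p,q\in\mathcal P^+$ be not positively collinear. Then neither is $p$ positively collinear to $r=p+q$ (for if $p=\lambda r$ with $\lambda>0$, then $q=(1-\lambda)\lambda^{-1}\,p/\lambda$ is a positive multiple of $p$). Hence the first inequality $\Phi(p)>p\cdot S(r)$ is strict, which propagates to $\Phi(p)+\Phi(q)>\Phi(p+q)$. When $p$ and $q$ are positively collinear, say $q=\lambda p$ with $\lambda>0$, $1$-homogeneity alone delivers the equality $\Phi(p+q)=(1+\lambda)\Phi(p)=\Phi(p)+\Phi(q)$, so no contradiction arises; this matches the definition of strict subadditivity for $1$-homogeneous functions given in Section~\ref{sect: not}.

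There is no real obstacle here: the whole argument is a one-line application of the defining inequality of a subgradient, evaluated at $r=p+q$ and combined over the two summands, with a short check that positive collinearity of $p$ with $p+q$ is equivalent to positive collinearity of $p$ with $q$ to handle the strict case cleanly.
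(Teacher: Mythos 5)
Your proof is correct and is essentially the paper's own argument: the paper applies the subgradient inequality at the convex combination $(1-\lambda)p+\lambda q$ and adds the two inequalities with weights $1-\lambda$ and $\lambda$ to get (strict) convexity, whereas you apply it at $r=p+q$ to get (strict) subadditivity --- the same key step, and equivalent in view of the assumed $1$-homogeneity. Two trivial points in your strict case: the displayed formula should read $q=(1-\lambda)\lambda^{-1}p$ (you have a stray extra factor of $\lambda^{-1}$), and you should note that $\lambda\geq 1$ is excluded because it would make $q$ a nonpositive multiple of $p$, which is impossible for $q\in\mathcal P^+$.
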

\begin{proof}
Let $S:\mathcal P^+\To\Lin \mathcal P$ be a (strict) subgradient of $\Phi$. Then $S$ (strictly) satisfies
\begin{align*}
  \Phi( p) &\geq p\cdot S((1-\lambda) p + \lambda q) \\
  \Phi( q) &\geq q\cdot S( (1-\lambda) p + \lambda q)
\end{align*}
for every $p,q\in\mathcal P^+$ ($p$ and $q$ not positively collinear), and every $0<\lambda<1$. Multiplying the first inequality by $1-\lambda$, the second one by $\lambda$, and then adding them up, we obtain that $\Phi$ (strictly) satisfies
\[
  \Phi(1-\lambda) p + \lambda q) \leq (1-\lambda)\Phi( p) + \lambda\Phi( q). 
\]
\end{proof}

\begin{proof}[Proof of Lemma \ref{lem: Dir}]
We first show that $\Cone(\mathcal P^+ -  {q})\subset {\mathcal D}( {q})$. It is easy to see that ${\mathcal D}( {q})$ is closed under taking conic combinations. The claim follows from the fact that $(\mathcal P^+ -  {q})\subset {\mathcal D}( {q})$. We now show that ${\mathcal D}( {q})\subset\Cone(\mathcal P^+ -  {q})$. If $ {p}\in {\mathcal D}( {q})$, then there is $\epsilon_{ {p}}>0$ and $ {r}\in \mathcal P^+$ such that $ {q}+\epsilon_{ {p}} {p}= {r}$. Then $ {p}=( {r}- {q})\epsilon_{ {p}}^{-1}$ and hence $ {p}\in \Cone (\mathcal P^+ - {q})$.
\end{proof}

\begin{proof}[Proof of Proposition \ref{prop: Phi+}]
(a) For $p\in\mathcal D(q)$ arbitrary, consider the line in $\Span \mathcal P$ with parametric equation
\[
    \gamma(t)=q + t(p-q), \quad t\in\R,
\]
passing through $q$ and $p$. Clearly, $\gamma(0)=q$ and $\gamma(1)=p$. Moreover, there is some $\epsilon>0$ such that the interval $[0,\epsilon]$ is mapped entirely in $\mathcal P^+$ under $\gamma$ (if $p\in\mathcal P^+$, then $\epsilon\geq1$). Then the function
\[
   \phi(t) = \Phi(q + t(p-q)), \quad t\in [0,{\epsilon}],
\]
is convex and its slope function
\[
   s_\phi(t_1,t_2) = \frac {\phi(t_2)-\phi(t_1)}{t_2-t_1},\quad t_1,t_2\in  [0,{\epsilon}],
\]
is nondecreasing \citep{Rock, NicPer}. We have that
\[
  \Phi_+'(p,q) = \lim_{t_2\To0+} \frac {\phi(t_2)-\phi(0)}{t_2} = \inf_{t_2>0} \frac {\phi(t_2)-\phi(0)}{t_2}.
\]

If $p\in\mathcal O(q)$, then there is some $\delta>0$ such that the interval $[-\delta,\delta]$ is mapped entirely in $\mathcal P^+$ under $\gamma$. Let $-\delta\leq t_1<0<t_2\leq \delta$. To prove that $\Phi_+'(p,q)$ is finite, we consider
\[
   \frac {\phi(0)-\phi(t_1)}{-t_1} \leq \frac {\phi(t_2)-\phi(0)}{t_2},
\]
and take the infimum in $t_2$.

(b) Homogeneity of $\Phi_+'(\cdot, q)$ follows from:
\begin{align*}
  \Phi_+(\lambda  p, q) =  \lim_{\tau\To 0^+} \frac{\Phi( q +  \tau \lambda  p) - \Phi( q)}{\tau}
                                         &\leq \lambda \lim_{\tau\To 0^+} \frac{\Phi( q + \lambda \tau  p) - \Phi( q)}{\lambda \tau}\\ &= \lambda \Phi_+( p, q).
\end{align*}
Let $ p_1,  p_2\in\mathcal D(q)$. Subadditivity of $\Phi_+'(\cdot, q)$ follows from:
\begin{align*}
  \Phi_+'( p_1+ p_2, q) &= \lim_{\tau\To 0^+} \frac{\Phi( q + \tau ( p_1+ p_2)) - \Phi( q)}{\tau} \\
                                         &\leq \lim_{\tau\To 0^+} \frac{\Phi( q/2 + \tau  p_1) - \Phi( q)/2}{\tau} + \lim_{\tau\To 0^+} \frac{\Phi( q/2 + \tau  p_2) - \Phi( q)/2}{\tau}\\
                                         &=\lim_{\tau\To 0^+} \frac{\Phi( q + 2\tau  p_1) - \Phi( q)}{2\tau} + \lim_{\tau\To 0^+} \frac{\Phi( q + 2\tau  p_2) - \Phi( q)}{2\tau}\\
                                         &= \Phi_+'( p_1, q) + \Phi_+'( p_2, q).
\end{align*}

(c) The claim follows from
\begin{align*}
  \Phi_+'(p, \lambda q) &=  \lim_{\tau\To 0^+} \frac{\Phi( \lambda q +  \tau  p) - \Phi(\lambda q)}{\tau}
                        =  \lim_{\tau\To 0^+} \frac{\Phi( q +  \tau  p/\lambda) - \Phi( q)}{\tau/\lambda}\\
                       &=  \Phi_+'( p, q).
\end{align*}

(d) We have
\[
   \Phi( p) \geq \Phi( q + p)-\Phi( q) \geq \frac{\Phi( q + \tau p)-\Phi( q)}{\tau} \geq \Phi_+'( p,  q),
\]
where $0<\tau<1$. The first inequality follows from sublinearity of $\Phi$, while the second and third follow from the fact that the slope function of $\Phi$ is nondecreasing. It remains to show that $\Phi( q)=\Phi_+'( q,  q)$. This follows immediately from
\begin{align*}
  \Phi({ q}) &= \lim_{\tau\To 0^+}\frac {(1+\tau)\Phi({ q})- \Phi({ q})}{\tau}
              =  \lim_{\tau\To 0^+}\frac {\Phi({ q} + \tau { q})- \Phi({ q})}{\tau}\\
             &= \Phi_+'( q,  q).
\end{align*}

(e) The claim is a direct consequence of
\[
   0 = \Phi_+'(0,q) = \Phi_+'(p-p,q) \leq \Phi_+'(p,q) + \Phi_+'(-p,q).
\]

(f) To show that $\mathcal O'(q)$ is a linear subspace of $\mathcal O(q)$ it is enough to show that it is closed under scalar multiplication and vector addition. Let $\lambda\in \R$ and $p\in \mathcal O'(q)$. Then, for $\lambda\geq 0$, $\Phi_+'(\lambda p,q)=\lambda \Phi_+'( p,q)$. Analogously, for $\lambda<0$ we have
\[
  \Phi_+'(\lambda p,q)=\Phi_+'(-\lambda (-p),q)=-\lambda \Phi_+'(-p,q)=\lambda (- \Phi_+'(-p,q))=\lambda \Phi_+'(p,q).
\]
Therefore, $\Phi_+'(\lambda p,q)=\lambda \Phi_+'(p,q)$ for any $\lambda\in\R$ and $p\in \mathcal O'(q)$. Then multiplying by $\lambda$ both sides of the identity
\[
  -\Phi_+'(-p,q) = \Phi_+'(p,q)
\]
and using the previous identity, we get that $\lambda p \in \mathcal O'(q)$. Hence, $\mathcal O'(q)$ is closed under scalar multiplication.

Suppose now that $p,r\in \mathcal O'(q)$. We have
\begin{align*}
  \Phi_+'(p+r,q) &\leq \Phi_+'(p,q) + \Phi_+'(r,q) = -(\Phi_+'(-p,q) + \Phi_+'(-r,q))\\
                 &\leq -\Phi_+'(-p-r,q) \leq \Phi_+'(p+r,q),
\end{align*}
where the last inequality follows from (e). Clearly, we must have equalities throughout. In particular,
\[
  -\Phi_+'(-p-r,q) = \Phi_+'(p+r,q)
\]
and
\begin{align*}
  \Phi_+'(p+r,q) = \Phi_+'(p,q) + \Phi_+'(r,q).
\end{align*}
Hence $p+r\in \mathcal O'(q)$. We conclude that $\mathcal O'(q)$ is a linear subspace and $\Phi_+'(\cdot,q)\big|_{\mathcal O'(q)}$ is linear.
\end{proof}

\begin{proof}[Proof of Proposition \ref{prop: subdiff}]
(a) The sufficient part of the claim follows from Proposition \ref{prop: Phi+} (d). Let us now show the necessary part. To that end, let $q^*\in \Lin \mathcal P$ be a subgradient of $\Phi$ at $q$, and let $p\in\mathcal P^+$ be arbitrary. Setting $q_t=q+(1-t)p$, we have $\Phi(q_t)\geq q_t\cdot q^*$ for all $t\in[0,1]$. Subtracting $\Phi(q)$ from both sides of the inequality and dividing by $(1-t)$, for $t\in(0,1)$, we get
\[
   \frac{\Phi(q+(1-t)p) - \Phi(q)} {1-t} \geq p\cdot q^*.
\]
Letting $t\uparrow 1$, we get
\[
   \Phi_+'(p,q)\geq p\cdot q^*
\]
as desired.

(b) The claim follows by restricting $\Phi$ to 1-dimensional affine spaces through $q$. On these spaces $\Phi$ is convex and differentiable and therefore has a unique subgradient. Since these subspaces cover the whole of $\Span\mathcal P$, it follows that the directional derivative $\Phi'(\cdot,q)$ is the unique subgradient of $\Phi$ there.

(c) In view of Proposition \ref{prop: Phi+} (a), $\Phi_+'(p,q)$ is finite for each $p\in\mathcal O(q)=\Span\mathcal P$. The hypothesis implies that there is at least one 1-dimensional linear subspace of $\Span \mathcal P$ on which $\Phi_+'(\cdot,q)$ is not linear. There are infinitely many ways we can choose a linear function on that space that is dominated by $\Phi_+'(\cdot,q)$. The claim now follows from the Hahn-Banach theorem stated below as Theorem \ref{thm: Hahn-B}.

(d) Since $\mathcal O(q)\not=\Span\mathcal P$, it follows that $\mathcal P^+\setminus \mathcal O(q)$ is nonempty. Take any $p$ in that set and consider the 1-dimensional linear space generated by the span of $p$. Since $\Phi_+'(\cdot,q)$ is defined only on its positive half-space, there are infinitely many linear functions that are dominated by $\Phi_+'(\cdot,q)$ on the whole space. The proof now follows from Theorem \ref{thm: Hahn-B}.

(e) There is no element of $\Lin \mathcal P$ that satisfies the condition in part (a) of this proposition. Therefore, $\partial\Phi(q)=\emptyset$.
\end{proof}

\begin{proof}[Proof of Theorem \ref{thm: exist}]
Suppose that $q^*\in \mathcal L(\mathcal P)$ satisfies $p\cdot q^*\leq\Phi_+'(p,q)$ for all $p\in\mathcal P^+$, with equality for $p=q$. In view of Proposition \ref{prop: Phi+} (d), we have that $p\cdot q^*\leq \Phi(p)$ for all $p\in\mathcal P^+$, and $q\cdot q^* = \Phi(q)$. Hence, $q^*$ is a $\mathcal P$-integrable subgradient of $\Phi$ at $q$.

The converse claim, that is, if  $q^*$ is a $\mathcal P$-integrable subgradient of $\Phi$ at $q$, then $p\cdot q^*\leq\Phi_+'(p,q)$ for all $p\in\mathcal P^+$, with equality for $p=q$, follows from Proposition \ref{prop: subdiff} (a).
\end{proof}

\begin{proof}[Proof of Theorem \ref{thm: uniq}]
The hypothesis implies that $\Phi_+'(\cdot,q)$ is linear on $\mathcal O(q)\subset \mathcal P^+$. By restricting $\Phi$ to 1-dimensional subspaces of $\mathcal O(q)$ it follows immediately that any subgradient of $\Phi$ must agree with $q^*$ on $\mathcal O(q)$. The assumption that $\mathcal O(q)^\perp=\{0\}$ implies that $\Phi$ may have at most one $\mathcal P$-integrable subgradient at $q$. Then the claim follows from the fact that $q^*$ is a subgradient of $\Phi$ at $q$.
\end{proof}

\section{Some Additional Facts}\label{app: exmpl}

The positive cones in many standard function spaces are nowhere dense sets. Let us show this for the Lebesgue space $L^1(\R^d)$. The positive cone of $L^1(\R^d)$ consists of all Lebesgue integrable functions $f\geq0$ a.e. on $\R^d$ and is denoted by $L^1_+(\R^d)$. We recall that a set in a topological vector space is \emph{nowhere dense} if its closure has empty interior.

\begin{prop}\label{prop: L1}
The positive cone of $L^1(\R^d)$ is nowhere dense.
\end{prop}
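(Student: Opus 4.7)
My plan is to establish nowhere density by splitting it into two standard pieces: first show that $L^1_+(\R^d)$ is already closed, so that its closure equals itself, and then show that it has empty interior.

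For closedness, I would use the fact that $L^1$-convergence $f_n \to f$ implies a.e. convergence along a subsequence. Since each $f_n \geq 0$ a.e., the pointwise a.e. limit is again $\geq 0$ a.e., so $f \in L^1_+(\R^d)$. This handles the reduction.

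The heart of the argument is showing that $L^1_+(\R^d)$ has empty interior. Given any $f \in L^1_+(\R^d)$ and any $\eps > 0$, I want to exhibit $g \in L^1(\R^d)$ with $\norm{f-g}_1 < \eps$ and $g < 0$ on a set of positive measure. The construction exploits that $\R^d$ has infinite Lebesgue measure while $f$ is integrable: fix any $\eta > 0$, and note that the set $\{x : f(x) \geq \eta/2\}$ has finite measure (by Chebyshev), so the sublevel set $A_\eta := \{x : f(x) < \eta/2\}$ has infinite measure. Choose a measurable $E \subset A_\eta$ with $0 < \abs{E} < \eps/\eta$, and set
\[
  g(x) = f(x) - \eta \mathbf{1}_E(x).
\]
Then $\norm{f - g}_1 = \eta \abs{E} < \eps$, while on $E$ we have $g < \eta/2 - \eta = -\eta/2 < 0$, so $g \notin L^1_+(\R^d)$. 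Hence $f$ is not an interior point.

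I do not anticipate any serious obstacle here; the only subtlety is making sure the perturbation set $E$ can be chosen both small in $L^1$-cost (i.e.\ controlled measure) and inside the sublevel set of $f$, which works precisely because $\R^d$ has infinite measure. Combining the two steps, $L^1_+(\R^d)$ is its own closure and has empty interior, so it is nowhere dense, completing the proof.
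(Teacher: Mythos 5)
Your proof is correct, and the empty-interior step takes a genuinely different route from the paper's. Both arguments reduce nowhere density to the same two facts --- $L^1_+(\R^d)$ is closed (a.e.\ convergence along a subsequence, as you say) and has empty interior --- but the perturbations differ. The paper constructs, for a given $f\geq 0$, a single nonnegative direction $g$ (built from a dyadic partition of $\R^d$ and the fact that no absolutely convergent series converges slowest: $b_k=a_k/\sqrt{r_k}$) such that $f-\alpha g\notin L^1_+(\R^d)$ for \emph{every} $\alpha>0$; one fixed ray exits the cone at all scales, which simultaneously shows $f$ fails to be algebraically interior, a theme the paper cares about elsewhere. You instead produce, for each $\eps>0$, an $\eps$-close function $f-\eta\mathbf{1}_E$ that is negative on a set of positive measure, locating $E$ inside the sublevel set $\{f<\eta/2\}$, which has infinite measure by Chebyshev because $\R^d$ does. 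Your version is more elementary and sidesteps the series construction (which tacitly needs $a_k>0$ infinitely often to make literal sense of ``changes sign''); what it gives up is only the stronger single-direction statement. Both proofs rest on the same underlying phenomenon: an integrable function on an infinite-measure space must be small on a large set, so the cone has no interior there.
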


\begin{proof}
We show that for every $f\geq 0$ a.e., there is $g\geq 0$ a.e. such that, for every $\alpha>0$, $f-\alpha g\not\in L^1_+(\Omega)$. This means that no open ball about $f$ is contained in $L^1_+(\R^d)$. Since $L^1_+(\R^d)$ is closed, then this would imply that $L^1_+(\R^d)$ is nowhere dense.

To prove our claim, we use the fact that there is no absolutely convergent series with a slowest rate of decay at infinity. We begin by partitioning $\R^d$ into dyadic regions
\[
  \omega_k = \{2^k \leq |x| < 2^{k+1}\}
\]
for $k \in \Z$. For $f\in L^1(\R^d)$, we set
\[
  a_k = \int_{\omega_k} f(x)dx.
\]
We have that the series
\[
   \sum_{k=0}^\infty a_k = \int_{\R^d} f(x)dx
\]
is absolutely convergent. If $r_k=\sum_{i\geq k}a_i$ is the tail of the series for each $k$, then the series $\sum_{k\geq0}a_k/\sqrt{r_k}$ is also convergent \citep{Rud1}. Notice that the ratio of the common term of the second to the first series tends to infinity as $k\To\infty$. Therefore, the second series has a strictly slower rate of convergence. There exists a function $g\in L^1_+(\R^d)$ such that the integrals of $g$ on $\omega_k$ are $b_k=a_k /\sqrt{r_k}$ and
\[
   \sum_{k=0}^\infty b_k = \int_{\R^d} g(x)dx.
\]
Clearly, for any $\alpha>0$, the difference $f-\alpha g$ changes sign for some $\omega_k$, and hence $f-\alpha g\not\in L^1_+(\R^d)$.
\end{proof}

The next example illustrates the notion of topological subgradient in the case when the assumption
$N^*\subset\mathcal L(\mathcal P)$ is not satisfied.

\begin{exmpl}\label{exmpl: sup}
Consider a  Lebesgue measure space  $(\Omega, \mathcal A, \mu)$ with $\Omega$ a compact subset of $\R^d$. We set $\mathcal P^+$ to be the positive cone of $C(\Omega)$, that is, the set of all nonnegative continuous functions on $\Omega$. The continuous dual of $C(\Omega)$ is the space of all real-valued Radon measures on $\Omega$. The fact that $\mathcal P^+$ contains constants implies that $\mathcal L(\mathcal P)\subseteq L^1(\Omega)$. Actually, $\mathcal L(\mathcal P)=L^1(\Omega)$ and hence the $\mathcal P$-integrable functions are the Radon measures that have a Lebesgue density. Since $L^1(\Omega)\subsetneq (C(\Omega))^*$, we see that in this case the notion of a $\mathcal P$-integrable subgradient is more restrictive than that of a topological subgradient.

We proceed to examine the implications of the latter observation on a concrete sublinear function. Let $\Phi: C(\Omega)\To\R$ be the \emph{supremum function}, that is,
\[
  \Phi(p) = \sup_{x\in\Omega} p(x).
\]
It is easy to check that $\Phi$ is non-strictly sublinear and continuous. The supporting hyperplane theorem guarantees the existence of a topological subgradient of $\Phi$ at each point in its domain that is a \emph{real Radon measure}. Let us see whether the subgradient is regular enough to be a proper scoring rule.

We first demonstrate that there are points $q\in\mathcal P^+$ at which $\Phi$ has no subgradient in $\mathcal L(\mathcal P)$. To that end, let $\mathcal M(q)$ denote the set of \emph{modes} of $q$, that is, the subset of $\Omega$ where $q$ reaches its maximum. Notice that $\mathcal M(q)$  is always compact. It can be shown that
\[
   {\Phi_+'}({ p}, q) = \sup_{x\in \mathcal M( q)} { p}(x),
\]
the proof of which is left to the reader. When $\mathcal M(q)=\{x_0\}$ is a singleton,  $\Phi_+'(\cdot, q)=\delta(x-x_0)$ is Dirac's delta function. Clearly, in this case $\Phi$ is G\^ateaux differentiable with derivative $\delta(x-x_0)$. We claim that $\Phi$ has no $\mathcal P$-integrable subgradient for any density $q$ with $\mu(\mathcal M(q))=0$.

Suppose conversely that $q^*\in\mathcal L(\mathcal P)$, $q^*\not=0$, is a subgradient of $\Phi$ at $q$. Then
\[
   \Phi_+'(p, q) \geq p \cdot q^*
\]
for all $p\in\mathcal P^+$. We shall show that this inequality implies $q^*(x)\leq0$ a.e. on $\Omega$, which leads to a contradiction with $\Phi(q)=q\cdot q^*>0$.

To show the latter claim, notice that $\Omega\setminus \mathcal M(q)$ is open, and hence for any $y\in \Omega\setminus \mathcal M(q)$, there is $\epsilon_y>0$ such that the ball about $y$ of radius $\epsilon_y$ lies in the complement of $\mathcal M(q)$ with respect to $\Omega$. Let $\{p_k\}$ be a sequence of densities approximating $\delta(x-y)$ entirely supported on this ball. Since $\Phi_+'(p_k,q)=0$, we get that $p_k \cdot q^*\leq 0$. If $y$ is a Lebesgue point of $q^*$, then we have the limit
\[
   \lim_{k\To\infty} p_k \cdot q^* = \delta (\cdot-y)\cdot q^* = q^*(y).
\]
Since almost every point of $q^*$ is a Lebesgue point, we get that $q^*(x)\leq 0$ a.e. on $\Omega$. This completes the proof of the claim.

In the case $\mu(\mathcal M(q))>0$, we may find a $\mathcal P$-integrable subgradient of $\Phi$ at $q$. Consider the function
\begin{align*}
  q^*(x)       = \begin{cases}
                        \frac{1}{\mu(\mathcal M(q))}       & x\in \mathcal M(q)\\
                        0                                  & x\in \Omega\setminus \mathcal M(q).
                  \end{cases}
\end{align*}
Clearly, $q\cdot q^*=\sup_{x\in\Omega} q(x)$ and $p\cdot q^* \leq \sup_{x\in\Omega} p(x)$ for all $p\in\mathcal P^+$. This furnishes our claim.
\end{exmpl}

In our final example, we illustrate the fact that at boundary points a sublinear function has either no subgradient, or infinitely many.

\begin{exmpl}\label{exmpl: Hahn-B}
Take $\Phi(x,y)=x+y$ on $\R_+^2=\{(x,y)\,|\, x\geq 0,\,y\geq0\}$. The graph of $\Phi$ is a part of a plane, so it is easy to see that $\Phi$ has infinitely many supporting planes at the boundaries of $\R_+^2$. Consider now
\[
   \Phi(x,y)=x\ln \frac x {x+y} + y\ln \frac y {x+y}
\]
on $\R_+^2$, which is Shannon entropy for binary variables. A computation shows that
\[
   \nabla\Phi(x,y) = \ln \frac x {x+y} + \ln \frac y {x+y}
\]
and hence $\nabla\Phi(x,y)\To-\infty$ when $(x,y)$ tends to the boundary of $\R_+^2$. This means that $\Phi$ has vertical tangent planes through the coordinate axes, which implies that $\Phi$ has no subgradient on the boundary of its domain.

The situation is the same when $\mathcal P^+$ is a subset of an infinite dimensional vector space. For example, one may use the Hahn-Banach theorem presented below to show the existence of multiple supporting hyperplanes at boundary points $q$ for which $\Phi_+'(p,q)$ is finite for all $p\in\mathcal P^+$. If, instead, there is $p\in\mathcal P^+$ for which $\Phi_+'(p,q)=-\infty$, then $\Phi$ has no subgradient at $q$.
\end{exmpl}

We now state a slight generalisation of the classical Hahn-Banach theorem. Let $E$ be a real vector space and $K\subset E$ be a convex cone.

\begin{thm}[Hahn-Banach theorem]\label{thm: Hahn-B}
Let $\phi : K \To \R$ be a sublinear function and $l_0 : E_0 \To \R$ be a linear functional on a linear subspace $E_0 \subseteq E$ which is dominated by $\phi$ on $E_0\cap K$, i.e.
\begin{align*}
  l_0(q) \leq \phi(q), \qquad\forall q \in E_0\cap K.
\end{align*}
Then there exists a linear extension $l : E \To \R$ of $l_0$ to the whole space $E$ such that
\begin{align*}
 l(q)  &= l_0(q),     \qquad\forall q\in E_0,\\
 l(q)  &\le \phi(q),  \qquad\forall q\in E\cap K.
\end{align*}
\end{thm}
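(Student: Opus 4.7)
The plan is to mimic the classical Hahn-Banach argument, adapting the one-step extension lemma to account for the fact that $\phi$ is only defined on the convex cone $K$ rather than on all of $E$. First I would observe that the hypotheses interact well: $K$ being a convex cone means $s_1 z_1 + s_2 z_2 \in K$ whenever $z_1,z_2 \in K$ and $s_1,s_2 > 0$, and the sublinearity of $\phi$ on $K$ gives $\phi(s_1 z_1 + s_2 z_2) \le s_1\phi(z_1)+s_2\phi(z_2)$ on such combinations. These are exactly the ingredients that make the one-dimensional extension step work.

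The main work is the one-step extension lemma: given a proper subspace $E_0 \subsetneq E$ and a vector $x_0 \in E \setminus E_0$, I would seek $c \in \R$ so that $l_1(y+tx_0):=l_0(y)+tc$ satisfies $l_1(z) \le \phi(z)$ for all $z = y+tx_0 \in (E_0+\R x_0)\cap K$. Splitting by the sign of $t$, existence of such a $c$ amounts to
\begin{equation*}
  \sup_{s_1>0,\,y_1-s_1 x_0\in K}\frac{l_0(y_1)-\phi(y_1-s_1 x_0)}{s_1} \;\le\; \inf_{s_2>0,\,y_2+s_2 x_0\in K}\frac{\phi(y_2+s_2 x_0)-l_0(y_2)}{s_2}.
\end{equation*}
To check this inequality, fix admissible $s_1,s_2>0$ and $y_1,y_2 \in E_0$. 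The conic combination $s_2(y_1-s_1 x_0)+s_1(y_2+s_2 x_0) = s_2 y_1+s_1 y_2$ lies in $E_0 \cap K$ because $K$ is a convex cone. Applying the hypothesis $l_0 \le \phi$ on $E_0 \cap K$ at this point, and then using sublinearity of $\phi$ on $K$, yields
\begin{equation*}
  s_2 l_0(y_1)+s_1 l_0(y_2) \;\le\; s_2\phi(y_1-s_1 x_0)+s_1\phi(y_2+s_2 x_0),
\end{equation*}
which is precisely the desired sup-$\le$-inf inequality after rearrangement. Edge cases (e.g.\ no positive $t$ with $y+tx_0 \in K$) are handled by interpreting the empty sup as $-\infty$ and empty inf as $+\infty$, so any $c$ from the non-empty side works; if both sides are empty, take $c = 0$.

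With the one-step extension in hand, I would close the argument by a standard Zorn's lemma application. Consider the set $\mathcal{E}$ of pairs $(F,\ell)$ where $F$ is a linear subspace with $E_0 \subseteq F \subseteq E$ and $\ell:F\to\R$ is a linear extension of $l_0$ satisfying $\ell \le \phi$ on $F \cap K$. Order $\mathcal{E}$ by extension: $(F_1,\ell_1) \preceq (F_2,\ell_2)$ iff $F_1 \subseteq F_2$ and $\ell_2|_{F_1}=\ell_1$. Every chain has an upper bound (take the union of domains and the common extension), so Zorn's lemma furnishes a maximal element $(F^*,l)$. The one-step extension lemma shows $F^* = E$, for otherwise we could extend in the direction of any $x_0 \notin F^*$ and contradict maximality. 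The resulting $l$ is the desired extension. The main obstacle — and really the only nontrivial point — is verifying the sup-$\le$-inf inequality above, and that is exactly where the cone structure of $K$ is essential: without it, the combination $s_2 y_1+s_1 y_2$ need not lie in $K$ and the hypothesis $l_0\le\phi$ could not be invoked.
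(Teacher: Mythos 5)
Your proposal is correct and follows exactly the route the paper intends: the paper offers no written proof beyond the remark that ``the proof of the version with $K\subset E$ is the same'' as the classical one, and your one-step extension plus Zorn's lemma argument is precisely that classical proof, with the single genuinely new point --- that $s_2 y_1 + s_1 y_2 = s_2(y_1 - s_1 x_0) + s_1(y_2 + s_2 x_0)$ lies in $E_0\cap K$ because $K$ is a convex cone, so the domination hypothesis and the sublinearity of $\phi$ on $K$ can both be invoked --- correctly identified and verified. Nothing further is needed.
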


In the classical formulation of the theorem, we have $K=E$. The proof of the version with $K\subset E$ is the same. In fact, if anything, the condition $K\subset E$ is easier to satisfy than $K=E$ when extending $l_0$.

\bibliographystyle{plainnat}
\bibliography{PSR-database}

\end{document}